\documentclass[final,3p,authoryear]{elsarticle}

\journal{Transportation Research Part E}

\usepackage{amsmath, amssymb, amsthm, mathtools}
\usepackage{booktabs}
\usepackage{array}
\usepackage{multirow}
\usepackage{float}
\usepackage{graphicx}
\usepackage{xcolor}
\usepackage{xurl}
\usepackage{enumitem}
\usepackage{setspace}
\usepackage{caption}
\usepackage{subcaption}
\usepackage{longtable}
\usepackage{tabularx}
\newcolumntype{C}{>{\centering\arraybackslash}X}
\usepackage{threeparttable}
\usepackage{algorithm}
\usepackage{rotating}
\usepackage{algpseudocode}
\usepackage{tcolorbox}
\usepackage{adjustbox}
\usepackage{makecell} 
\usepackage{xltabular} 
\usepackage{pdflscape}

\usepackage{hyperref}
\hypersetup{
    colorlinks=true,
    linkcolor=blue,
    citecolor=blue,
    urlcolor=blue
}
\usepackage[nameinlink,capitalise,noabbrev]{cleveref}

\newtheorem{proposition}{Proposition}

\theoremstyle{definition}
\newtheorem{definition}{Definition}

\newtheorem{remark}{Remark}

\newcommand{\Prob}{\mathbb{P}}
\newcommand{\R}{\mathbb{R}}

\newcommand{\arc}[2]{(#1,#2)}

\begin{document}

\begin{frontmatter}

\title{Securing the Flow: Maritime Energy Resilience under Correlated and Decision-Dependent Disruptions}

\author[addr1]{Monit Sharma}

\author[addr1]{Hoong Chuin Lau\corref{cor1}}
\ead{hclau@smu.edu.sg}

\address[addr1]{School of Computing and Information Systems,  Singapore Management University, Singapore}

\cortext[cor1]{Corresponding author}

\begin{abstract}
We develop a two-stage stochastic multi-commodity flow model for
designing a resilient maritime energy supply network under correlated
chokepoint disruptions. A national energy-security planner selects
strategic inventory positions and infrastructure activations before
uncertainty is resolved, then routes four commodity classes—crude oil,
LNG, LPG, and fertilizer—through a capacitated network as recourse.
Three features distinguish the model from the existing literature:
disruption scenarios across maritime corridors are \emph{correlated},
reflecting the empirical reality that geographically proximate
chokepoints (Hormuz, Bab~el-Mandeb) fail jointly; scenario probabilities
depend endogenously on first-stage design decisions via an affine
distortion, formalizing \emph{risk exposure through utilization}; and a
mean--CVaR objective protects against tail-risk shortage outcomes.
We establish that the decision-dependent probability model admits an
exact MILP reformulation via McCormick linearization, that CVaR
preserves scenario-wise decomposability, and that the resulting Benders
decomposition with corridor-based group-failure cuts converges in
finitely many iterations. The model is calibrated to Indian maritime
energy imports using EIA, UNCTAD, World Bank, and operational data from
the 2026 Hormuz crisis. On the calibrated 16-node, 28-arc instance, Benders recovers the extensive-form optimum to numerical precision at every scenario size up to $|S|=729$, with iteration count constant at 10--11; the empirical content of the finite-convergence result holds along the scenario dimension On the calibrated network, the value of stochastic modeling (VSS) is 14.8\% of the optimal objective; the value of
decision-dependent probabilities (VEP) ranges from 0.93\% at the
analyst-set exposure sensitivity to 8.18\% at the probability-validity
boundary; the mean--CVaR frontier exhibits a design phase transition at
confidence level $\alpha\approx 0.75$; and the value of modeling
correlation is identically zero across all $\rho$- and
$\gamma$-stress tests, a substantive finding that we interpret as
\emph{structural joint-failure resilience} of the calibrated network:
the diversified route-and-inventory portfolio selected by the
stochastic program absorbs joint-corridor disruption through the same
hedging mechanism that absorbs single-corridor disruption. LPG emerges
as the most exposed commodity under the optimal policy, while crude oil
is fully hedgeable via pipeline bypass and strategic reserves.
\end{abstract}

\begin{keyword}
Maritime Logistics \sep Supply Chain Resilience \sep Stochastic Programming \sep Decision-Dependent Probabilities \sep Chokepoint Disruptions
\end{keyword}

\end{frontmatter}

\section{Introduction}
\label{sec:introduction}

Capacitated network design under uncertainty is a foundational challenge in
operations research, with applications spanning telecommunications, freight
logistics, humanitarian relief, and energy supply chains. When disruptions
can disable network arcs, the planner faces a two-stage decision: invest in
infrastructure, backup capacity, and strategic inventory \emph{before}
uncertainty is resolved, then route commodities through the surviving
network \emph{after} a disruption scenario materializes. 
While the Stochastic Capacitated Multimodal Network Design Problem (S-CMNDP) has been studied extensively, existing models often rely on the simplifying and frequently inaccurate assumptions of independent failures and exogenous disruption probabilities \citep{snyder2005, lim2010}. 

In this paper, we consider the resilience of an import-dependent economy faced with maritime disruptions, and address three interlocking and practically important extensions to S-CMNDP, namely correlated geographic risk, endogenous exposure, and tail-risk sensitivity. We propose to combine them in a single tractable model, and show that their interaction produces qualitatively different design prescriptions.

To capture the strategic reality of maritime energy supply resilience, our model incorporates a dual-risk framework that distinguishes between how risks are structured in the environment and how they are triggered by the planner’s actions:
\begin{enumerate}
    \item Correlated Environmental Risk: We move beyond the assumption of independent arc failures by modeling geographic correlation. This reflects the empirical reality that maritime chokepoints are not isolated; a disruption in the Strait of Hormuz significantly elevates the simultaneous risk at the Bab el-Mandeb due to shared regional geopolitical friction.
    \item Decision-Dependent Behavioral Risk: Conversely, we recognize that risk is not purely exogenous. Using decision-dependent probabilities, the model acknowledges that a planner’s commitment to a specific corridor—through long-term contracts or infrastructure activation—endogenously shifts the nation’s risk exposure. In this framework, the 'act of routing' is itself a risk-modulating decision, as high utilization of vulnerable paths increases the probability mass of disruption scenarios in the planner’s strategic assessment. 
\end{enumerate}

\paragraph{Motivating Problem}
Maritime chokepoints provide a compelling application for this model. The Strait of Hormuz, the Bab el-Mandeb strait, and the Suez Canal collectively channel over 40\% of globally traded crude oil and liquefied natural gas (LNG).
These corridors are geographically proximate and geopolitically linked:
disruption at one raises the likelihood of cascading disruption at
neighboring passages. For an import dependent economy like India, the world's third-largest energy consumer which
imports approximately 4.5~million barrels per day of crude oil from
Gulf producers via Hormuz, alongside 25~Mt/yr of LNG (50\% from
Qatar), 18~Mt/yr of LPG (90\% from the Middle East), and 5~Mt/yr of
fertilizer feedstock, such disruptions create a design challenge. 
A planner must design
its supply network, selecting suppliers, transport modes, bypass
infrastructure, and strategic stockholding, against a correlated
disruption landscape in which the very act of routing through a vulnerable
corridor contributes to the nation's risk exposure. This necessitates a model that captures (i)~multi-commodity flows with commodity-specific
admissibility constraints, (ii)~correlated disruption scenarios calibrated
to real geopolitical data, (iii)~endogenous probability distortion via
affine decision-dependence, and (iv)~tail-risk protection through a mean--CVaR objective.

\paragraph{Contributions}
This paper makes three contributions to the stochastic network design literature.

\begin{enumerate}[leftmargin=2em, itemsep=0.3em]
    \item \textbf{Formulation.} We present a two-stage stochastic model
    for capacitated multi-commodity network design that simultaneously
    incorporates commodity-specific admissibility, correlated arc
    failures across disruption groups, decision-dependent scenario
    probabilities via affine distortion, corridor-dependence caps, and
    a mean--CVaR objective. We prove that the resulting bilinear model
    admits an exact MILP reformulation
    (\Cref{prop:milp-reform}) via McCormick linearization,
    and that the CVaR term preserves scenario-wise decomposability
    (\Cref{prop:cvar-decomp}). We are not aware of a prior prescriptive
    maritime energy network design model that combines all five features
    in a single tractable formulation.

    \item \textbf{Algorithm.} We develop a Benders decomposition with
    correlation-exploiting corridor-based group-failure cuts and prove
    finite convergence (\Cref{prop:benders_finite}). The implementation
    matches the extensive-form objective to machine precision on every
    converged instance and recovers identical first-stage solutions.
    On the calibrated network, scenario count grows from $|S|=9$ to
    $|S|=729$ with iteration count essentially constant at 10--11,
    confirming the empirical content of the convergence result.

    \item \textbf{Computational study and managerial findings.}
    Calibrating the model to the Indian maritime energy import network
using EIA, UNCTAD, World Bank, and 2026 Hormuz-crisis operational data
(protocol in \ref{app:calibration}) yields three insights. First, the
decision-dependent probability channel reshapes the first-stage route
portfolio away from utilization-sensitive corridors, with the magnitude
scaling monotonically in the exposure-sensitivity prior~$\delta$; the
calibrated network further exhibits \emph{structural joint-failure
resilience}, identifying~$\delta$ rather than the joint-probability
structure as the central calibration object. Second, crude oil and
fertilizer are fully protected through pipeline bypass and strategic
reserves, while LPG emerges as the most exposed commodity and the
natural target of incremental resilience investment. Third, a joint
$(\lambda,\alpha)$ sweep of the mean--CVaR objective traces the
planner's price of resilience and resolves the apparent flatness of
the $\lambda$-only frontier as a discrete tail-saturation artifact,
with the Benders implementation remaining numerically exact across the
scenario libraries tested.

\end{enumerate}

\paragraph{Assumptions}
Our formulation assumes that there is a coordinated national-level energy security agency that exercises four distinct authorities: strategic
reserve holding, infrastructure enablement (pipeline sanctioning
and long-term supply contracts), regulatory diversification
mandates, and political risk-tolerance calibration.  
In other words, we conceptualize the 'planner' as a centralized national authority (e.g., an Energy Security Council) with the mandate to manage systemic risk. While operational flows are executed by private entities, the planner influences the network topology through strategic reserve positioning, infrastructure investment subsidies, and diversification regulations that cap corridor-specific exposure.
For example, the United States (SPR + DoE), China (NDRC + SINOPEC/CNPC coordination), and
the European Union (gas storage obligations under Regulation
2022/1032) all maintain structurally analogous planning functions, differing only in institutional labels.
 
\paragraph{Paper outline}
\Cref{sec:literature} reviews related work. \Cref{sec:problem} describes
the problem setting. \Cref{sec:model} presents the mathematical
formulation, including the decision-dependent probability mechanism and
CVaR integration. \Cref{sec:structural} establishes tractability and
decomposition results.
\Cref{sec:algorithm} develops the Benders decomposition algorithm with
valid inequalities. \Cref{sec:computational} reports computational results.
\Cref{sec:conclusion} concludes the paper.

\section{Literature Review}
\label{sec:literature}

Our work intersects four streams: stochastic network design under
disruption, risk-averse optimization with CVaR, decision-dependent
(endogenous) uncertainty, and maritime chokepoint and energy-security
resilience.

\paragraph{Stochastic supply chain network design}
The two-stage stochastic programming framework for facility location and
network design under disruption was established by \cite{snyder2005}
and extended to reliable network design by \cite{lim2010}. More recent
contributions include \cite{tolooie2020}, who apply L-shaped
methods for reliable SCND; \cite{fattahi2020}, who
introduce resilience metrics via quadratic conic optimization; and \cite{vaziri2025}, who solve a trilevel
designer--interdictor--designer model for multicommodity network design
under stochastic interdictions via branch-and-Benders-cut. In the maritime
domain, \cite{brouer2014} and \cite{koza2020}
address vessel routing recovery but not strategic chokepoint-level
resilience. Recent procurement-focused work also demonstrates the value of risk-aware stochastic optimization under disruption. \cite{chase2025multi} develop a two-stage multi-period procurement model for COVID-19 lockdown disruptions, combining supplier-risk assessment, supplier diversification, and stock-surplus decisions. Their setting is procurement planning rather than maritime network design, but it reinforces the role of anticipative stochastic optimization in disruption-resilient supply planning.

\paragraph{CVaR in stochastic programming}
\cite{rockafellar2000} showed that CVaR can be optimized via
linear programming through an auxiliary variable reformulation. \cite{noyan2012} developed CVaR-constrained stochastic programs and decomposition
methods for facility location. \cite{carneiro2010risk} applied
mean--CVaR to chemical supply chain disruption mitigation, demonstrating
that CVaR impact increases with decreasing disruption frequency. \cite{liu2020} introduced behavioral CVaR for hazmat network design.
\cite{ghaffarinasab2023} combined mean--CVaR hub location with Benders
decomposition and scenario grouping. \cite{gao2019}
extended the Risk Exposure Index to multiple simultaneous disruptions with
worst-case CVaR, proving that optimal strategic inventory positioning is
characterized by a conic program, though restricted to single-commodity
flow.

\paragraph{Decision-dependent uncertainty}
\cite{goel2006} distinguish Type~1 (decisions alter
probability distributions) from Type~2 (decisions determine information
timing) endogenous uncertainty. \cite{hellemo2018}
formalize decision-dependent probabilities through affine transformations,
providing the natural formalism for our chokepoint model. \cite{basciftci2021} derive exact MILP reformulations for DRO facility location
with decision-dependent stochastic demand. \cite{bhuiyan2020} model
network design with protection decisions affecting post-disruption capacity
probabilistically. \cite{luo2025two} study supply-chain network design under disruptions and endogenous demand uncertainty using a two-stage stochastic-robust formulation. \cite{zhao2019} develop decision-dependent
adaptive robust optimization for biofuel supply chains. To our knowledge,
no prior paper combines decision-dependent disruption probabilities with
CVaR risk measures in a multi-commodity network design context.

\paragraph{Maritime chokepoint and energy-security resilience}
A separate empirical literature quantifies the systemic exposure of
global trade to maritime chokepoint disruption.
\cite{verschuur2025} use a high-resolution AIS-based simulation
framework to estimate the trade-volume impact of disruptions at major
chokepoints, including Hormuz, Bab~el-Mandeb, and the Suez Canal, but
adopt a descriptive rather than prescriptive stance. The 2024 Houthi
campaign in the Red Sea and the 2026 Strait of Hormuz disruption
prompted policy-oriented analyses by the
\cite{iea2026hormuz}, \cite{cnbc2026pipelines}, and the
\cite{indiastrategic2026hormuz}, documenting Cape rerouting,
war-risk insurance premia, and pipeline-bypass utilization. On the
energy-security side, \cite{eu2022gasstorage} formalizes the European
Union's gas-storage filling mandate and \cite{eia2016spr} document the
expansion of national strategic petroleum reserves. None of these
contributions formulates the joint route-and-inventory design problem
under correlated chokepoint disruption as an optimization model. Our
formulation fills this gap by combining multi-commodity flow with
correlated chokepoint failures, exposure-adjusted probabilities, and
mean--CVaR risk aversion in a prescriptive stochastic MILP.

\paragraph{Position relative to the existing literature}
The literature provides each of the five ingredients of our model in isolation: stochastic network design under disruption \citep{snyder2005, lim2010, tolooie2020, fattahi2020}, multi-commodity interdiction \citep{vaziri2025}, mean--CVaR with decomposition \citep{noyan2012, ghaffarinasab2023}, and decision-dependent probabilities \citep{goel2006, hellemo2018, basciftci2021, bhuiyan2020}. \cite{verschuur2025} provide the closest empirical counterpart in the maritime-chokepoint setting but rely on descriptive simulation rather than prescriptive optimization. Our contribution is to combine all five ingredients—correlated chokepoint failures, decision-dependent probabilities, mean--CVaR risk aversion, commodity-specific admissibility, and corridor-dependence caps—within a single tractable two-stage MILP, and to show that their interaction admits exact reformulation, scenario-decomposable recourse, and finitely convergent Benders decomposition.

\section{Problem Description}
\label{sec:problem}

We consider a directed network $G = (N, A)$ where $N$ is a set of nodes
(supply terminals, transshipment hubs, storage facilities, demand centers)
and $A \subseteq N \times N$ is a set of feasible directed arcs. Multiple
commodities $c \in C$ must be routed from supply nodes $R \subseteq N$ to
demand nodes $D \subseteq N$. Arcs are grouped into \emph{corridors}
$\ell \in \mathcal{L}$ that share a common geographic bottleneck (e.g.,
all arcs transiting the Strait of Hormuz form a single corridor).

The planning problem has two stages:

\begin{enumerate}[leftmargin=2em]
    \item \textbf{First stage (design).} Before uncertainty is resolved,
    the planner determines strategic inventory positions $W_{ic} \ge 0$
    at each node--commodity pair, corresponding to ISPRL cavern
    fills (crude), regasification buffer stocks (LNG), and port
    terminal reserves (LPG, fertilizer), and selects arc activation
    decisions $y_a \in \{0,1\}$ (securing G2G long-term supply
    contracts, sanctioning pipeline bypass infrastructure via
    sovereign guarantees, or chartering dedicated vessel capacity).
    These are ``here-and-now'' decisions.

    \item \textbf{Second stage (recourse).} After a disruption scenario
    $s \in S$ is realized---determining which arcs are degraded and by how
    much---the planner routes commodity flows, draws on strategic
    inventory, and incurs shortage penalties for unmet demand.
\end{enumerate}

In the case study, the planner represents the coordinated decision-making apparatus through which a national government manages maritime energy supply-chain risk: strategic reserve operators (e.g., ISPRL in India, the SPR in the United States, NDRC-coordinated entities in China), pipeline-offtake authorities, flag-state regulators, and crisis-response cabinets. The model variables $W_{ic}$, $y_a$, $\bar\Phi_\ell$, $\lambda$, and $\delta_{sa}$ are intentionally generic so that the institutional grounding can be swapped without altering the mathematical structure. \ref{app:institutional} provides the detailed mapping between model variables and the Indian institutional authorities used for calibration.

Three structural features distinguish our model:

\begin{itemize}[leftmargin=2em, itemsep=0.2em]
    \item \textbf{Correlated arc failures.} Arcs within a corridor share
    correlated failure modes. Scenarios $s \in S$ encode joint capacity
    reductions, cost surcharges, transit time increases, and
    commodity-specific admissibility restrictions across all corridors
    simultaneously. A dual-disruption scenario (Hormuz~$+$~Bab el-Mandeb)
    has materially higher probability than the product of marginal
    disruption probabilities.

    \item \textbf{Decision-dependent probabilities.} The probability
    $p_s(\mathbf{y})$ of scenario $s$ depends on first-stage decisions
    $\mathbf{y}$: activating routes through vulnerable corridors shifts
    probability mass toward scenarios in which those corridors are
    disrupted. This formalizes \emph{risk exposure through utilization}.

    \item \textbf{CVaR risk aversion.} The planner minimizes a weighted
    combination of expected recourse cost and the conditional value-at-risk
    at level $\alpha$, protecting against high-cost tail scenarios. In the
    maritime context, this ensures the solution hedges against worst-case
    dual-disruption events, not just expected outcomes.
\end{itemize}

\section{Mathematical Formulation}
\label{sec:model}

We present the formulation in three layers: the base two-stage stochastic
program (\Cref{sec:base-model}), the decision-dependent uncertainty probability mechanism
(\Cref{sec:ddu}), and the CVaR integration (\Cref{sec:cvar}).

\subsection{Sets, Parameters, and Decision Variables}
\label{sec:notation}

\begin{table}[H]
\centering
\caption{Sets and indices.}
\label{tab:sets}
\begin{tabular}{@{}ll@{}}
\toprule
Symbol & Description \\
\midrule
$N$ & Set of nodes ($|N|=16$ in the case study) \\
$A \subseteq N \times N$ & Set of feasible directed arcs ($|A|=28$) \\
$C$ & Set of commodities: \{crude, LNG, LPG, fertilizer\} \\
$S$ & Set of disruption scenarios ($|S|=9$) \\
$\mathcal{L}$ & Set of corridors: \{Hormuz, Bab/Suez, Cape, pipeline, direct\} \\
$R \subseteq N$ & Supply nodes (Gulf terminals, non-Gulf suppliers) \\
$D \subseteq N$ & Demand nodes (Indian ports: Jamnagar, Mumbai, etc.) \\
\bottomrule
\end{tabular}
\end{table}

\begin{table}[H]
\centering
\caption{Model parameters.}
\label{tab:params}
\begin{tabularx}{\textwidth}{@{}>{\raggedright\arraybackslash}p{2.5cm}X@{}}
\toprule
Symbol & Description \\
\midrule
$\bar{p}_s$ & Baseline probability of scenario $s$, $\sum_{s} \bar{p}_s = 1$ \\
$\delta_{sa}$ & DDU probability shift: change in $p_s$ per activation of arc $a$ \\
$f_a$ & Fixed activation cost for arc $a$ \\
$c_{ac}^{s}$ & Transport cost on arc $a$, commodity $c$, scenario $s$ \\
$\bar{u}_{ac}^{s}$ & Capacity of arc $a$ for commodity $c$ under scenario $s$ \\
$\theta_{ac}^{s} \in \{0,1\}$ & Admissibility: can commodity $c$ transit arc $a$ in scenario $s$? \\
$d_{ic}$ & Demand for commodity $c$ at node $i \in D$ \\
$r_{ic}$ & Supply of commodity $c$ at node $i \in R$ \\
$L_{ic}$ & Storage capacity for commodity $c$ at node $i$ \\
$h_{c}$ & Holding cost for commodity $c$ per period \\
$\pi_{c}$ & Unmet-demand (shortage) penalty for commodity $c$ \\
$\bar{\Phi}_\ell$ & Corridor-dependence cap for corridor $\ell$ \\
$\lambda \in [0,1]$ & Weight on CVaR term in the mean--CVaR objective \\
$\alpha \in (0,1)$ & CVaR confidence level \\
\bottomrule
\end{tabularx}
\end{table}

\begin{table}[H]
\centering
\caption{Decision variables.}
\label{tab:variables}
\begin{tabularx}{\textwidth}{@{}>{\raggedright\arraybackslash}p{2.5cm}X@{}}
\toprule
Symbol & Description \\
\midrule
\multicolumn{2}{@{}l}{\emph{First-stage (design) variables}} \\
$y_a \in \{0,1\}$ & $= 1$ if arc $a$ is activated \\
$W_{ic} \ge 0$ & Strategic inventory of commodity $c$ pre-positioned at node $i$ \\[0.5em]
\multicolumn{2}{@{}l}{\emph{Second-stage (recourse) variables, indexed by scenario $s$}} \\
$x_{ac}^{s} \ge 0$ & Flow of commodity $c$ on arc $a$, scenario $s$ \\
$I_{ic}^{s} \ge 0$ & Ending inventory of commodity $c$ at node $i$, scenario $s$ \\
$u_{ic}^{s} \ge 0$ & Unmet demand (shortage) for commodity $c$ at node $i$, scenario $s$ \\[0.5em]
\multicolumn{2}{@{}l}{\emph{Risk variables}} \\
$\nu \in \R$ & VaR auxiliary variable \\
$\xi_s \ge 0$ & CVaR excess variable for scenario $s$ \\
\bottomrule
\end{tabularx}
\end{table}

\subsection{Base Two-Stage Stochastic Model}
\label{sec:base-model}

We first present a baseline two-stage stochastic program under two
simplifying assumptions: (i) scenario probabilities are exogenous and
therefore fixed at their baseline values $\bar p_s$, and (ii) the
planner is risk-neutral, so only expected cost is minimized. This base
model provides the benchmark formulation to which we later add
decision-dependent probabilities and mean--CVaR risk aversion.

In the first stage, before the disruption scenario is realized, the
planner chooses arc-activation decisions $y_a$ and strategic inventory
positions $W_{ic}$. In the second stage, after scenario $s \in S$ is
revealed, the planner routes flows through the surviving network,
draws down inventory, and allows unmet demand at a penalty. The second
stage is therefore a recourse problem that evaluates how costly the
first-stage design is under each disruption realization.

\paragraph{Objective}
Under exogenous probabilities and risk neutrality, the objective is
\begin{align}
\min \; Z
&=
\underbrace{\sum_{a \in A} f_a y_a
+ \sum_{i \in N}\sum_{c \in C} h_c W_{ic}}_{\text{first-stage design cost}}
+
\underbrace{\sum_{s \in S} \bar p_s \, Q_s(\mathbf y,\mathbf W)}_{\text{expected second-stage recourse cost}}.
\label{eq:obj-rn}
\end{align}

The first term captures the here-and-now design cost: $f_a y_a$ is the
fixed cost of activating arc $a$, while $h_c W_{ic}$ is the cost of
pre-positioning commodity $c$ at node $i$. The second term is the
expected recourse cost, obtained by weighting the scenario-specific
second-stage cost $Q_s(\mathbf y,\mathbf W)$ by the exogenous scenario
probability $\bar p_s$.

For each scenario $s$, the recourse function is defined as
\begin{align}
Q_s(\mathbf y,\mathbf W)
=
\min \;
& \sum_{a \in A}\sum_{c \in C} c_{ac}^s x_{ac}^s
+ \sum_{i \in N}\sum_{c \in C} h_c I_{ic}^s
+ \sum_{i \in D}\sum_{c \in C} \pi_c u_{ic}^s .
\label{eq:recourse}
\end{align}

The three terms in \eqref{eq:recourse} represent, respectively:
(i) scenario-dependent transportation cost,
(ii) end-of-period inventory holding cost, and
(iii) shortage penalties for unmet demand. Thus, $Q_s(\mathbf y,\mathbf W)$
is the minimum operational cost of responding to disruption scenario $s$
given the first-stage design decisions.

\paragraph{Flow balance}
For every node $i \in N$, commodity $c \in C$, and scenario $s \in S$,
material conservation is enforced by
\begin{align}
W_{ic}
+ r_{ic}
+ \sum_{\arc{j}{i}\in A} x_{ji,c}^{s}
- \sum_{\arc{i}{j}\in A} x_{ij,c}^{s}
- d_{ic}
+ u_{ic}^{s}
=
I_{ic}^{s}.
\label{eq:flow-balance}
\end{align}

Equation \eqref{eq:flow-balance} states that, for each
node--commodity--scenario triple, initial strategic inventory $W_{ic}$,
local supply $r_{ic}$, and inbound flow must cover outbound flow and
demand $d_{ic}$. Any unmet demand is captured by the shortage variable
$u_{ic}^s$, and any remaining amount is recorded as end-of-period
inventory $I_{ic}^s$. Supply and demand are scenario-independent in the
calibrated case study; scenario heterogeneity enters the recourse problem
through arc capacities, costs, and admissibility, not through demand or
supply realizations. This is the core mass-balance equation of the model
and links the network flow decisions to supply, demand, storage, and
shortages.

\paragraph{Capacity constraint}
For each arc $a \in A$, commodity $c \in C$, and scenario $s \in S$,
flow is bounded by scenario-dependent capacity and can occur only if the
arc is activated in the first stage:
\begin{align}
x_{ac}^{s}
\le
\bar u_{ac}^{s}\, y_a,
\quad \forall a \in A,\; c \in C,\; s \in S.
\label{eq:arc-cap}
\end{align}

Constraint \eqref{eq:arc-cap} is the principal linking constraint between
the two stages. If $y_a = 0$, then no recourse flow can use arc $a$.
If $y_a = 1$, then arc $a$ is available up to its scenario-specific
capacity $\bar u_{ac}^{s}$. In this way, first-stage infrastructure or
contracting decisions determine the feasible second-stage response.



\paragraph{Joint Admissibility and Capacity}
To jointly accommodate scenario-specific admissibility and
first-stage arc activation and capacity constraints, we further tighten Constraint \eqref{eq:arc-cap} as follows: 
\begin{align}
x_{ac}^s \le \theta_{ac}^s \bar u_{ac}^s y_a,
\qquad \forall a \in A,\; c \in C,\; s \in S.
\label{eq:admiss-tight}
\end{align}
When $\theta_{ac}^s = 0$, the corresponding flow is forced to zero; when
$\theta_{ac}^s = 1$, the bound reduces to the activated scenario capacity.

\paragraph{Storage capacity}
Strategic and end-of-period inventories are limited by available storage
capacity:
\begin{align}
W_{ic} \le L_{ic}, \qquad
I_{ic}^{s} \le L_{ic},
\quad \forall i \in N,\; c \in C,\; s \in S.
\label{eq:storage}
\end{align}

These bounds ensure that both pre-positioned inventory and residual
inventory remain physically feasible at each node.

\paragraph{Corridor-dependence cap}
To avoid excessive concentration of traffic on a single vulnerable
corridor, we impose a corridor-dependence cap. For each corridor
$\ell \in \mathcal L$ and scenario $s \in S$,
\begin{align}
\sum_{a \in \ell}\sum_{c \in C} x_{ac}^{s}
\le
\bar\Phi_\ell
\sum_{a \in A}\sum_{c \in C} x_{ac}^{s},
\quad \forall \ell \in \mathcal L,\; s \in S.
\label{eq:corridor-cap}
\end{align}

Constraint \eqref{eq:corridor-cap} is a ratio-type diversification
constraint written linearly. It restricts the fraction of total system
flow that can be routed through corridor $\ell$, even if that corridor
is operational and cheap, thereby reducing structural exposure to
concentrated chokepoint risk. The constraint is well-defined whenever
the aggregate scenario flow is strictly positive; in the calibrated
case study, aggregate demand is strictly positive in every scenario and
shortage penalties induce strictly positive total flow at any feasible
recourse solution, so the right-hand side does not collapse. If a
scenario were severe enough that complete network failure forced
$\sum_{a,c} x_{ac}^{s} = 0$, the constraint would be vacuously satisfied
($0 \leq 0$) and could be safely dropped from that scenario's recourse
LP.

Taken together, \eqref{eq:obj-rn}--\eqref{eq:corridor-cap} (less Constraint \eqref{eq:arc-cap} as it is redundant) define the
baseline risk-neutral two-stage stochastic model with fixed scenario
probabilities. In the next subsection, we relax the exogeneity
assumption by allowing scenario probabilities to depend on first-stage
design choices.

\subsection{Decision-Dependent Disruption Probabilities}
\label{sec:ddu}

The baseline model assumes that the disruption probabilities
$\bar p_s$ are exogenous and fixed in advance. In many resilience-planning
settings, however, this assumption is restrictive: the planner's own
first-stage design choices affect how exposed the system is to specific
disruption scenarios. In the present context, a network design that relies
more heavily on vulnerable corridors such as Hormuz or Bab el-Mandeb is
more exposed to scenarios in which those corridors are disrupted. We
therefore allow scenario probabilities to depend on the first-stage arc
activation decisions.

This modeling step should be interpreted as an \emph{exposure-adjusted}
probability mechanism. The idea is not that activating an arc physically
causes a disruption. Rather, a design that depends more strongly on a
fragile corridor shifts the relevant planning probability mass toward
scenarios in which that corridor fails, thereby capturing endogenous
exposure to disruption risk. 
In institutional terms, the $\delta_{sa}$ parameters encode a
forward-looking intelligence assessment produced by MEA, R\&AW,
and the Joint Intelligence Committee, of how each arc activation
shifts the credibility of specific adversarial disruption scenarios,
and are therefore best understood as planning-probability
adjustments rather than physical causal effects.

\begin{definition}[Affine decision-dependent probabilities]
\label{def:ddp}
For each scenario $s \in S$, let the probability of occurrence depend
affinely on the first-stage arc activation vector
$\mathbf y \in \{0,1\}^{|A|}$ according to
\begin{align}
p_s(\mathbf y)
=
\bar p_s + \sum_{a \in A} \delta_{sa}\, y_a,
\qquad \forall s \in S.
\label{eq:ddp}
\end{align}
Here, $\bar p_s$ denotes the baseline probability of scenario $s$, and
$\delta_{sa}$ measures the change in the probability of scenario $s$
induced by activating arc $a$.
\end{definition}


The dependence of $p_s(\mathbf y)$ on the first-stage activation variables
$y_a$, rather than realized second-stage flows, is intentional. In the present
setting, $y_a$ represents a strategic commitment to a corridor through
contracting, infrastructure activation, or long-term routing dependence. The
resulting probability distortion therefore captures exposure created by the
design itself, rather than by scenario-contingent recourse adjustments. This
choice preserves the two-stage timing structure and keeps the endogenous
uncertainty model computationally tractable.

To ensure that \eqref{eq:ddp} defines a valid probability distribution for
every feasible first-stage decision vector, we impose
\begin{align}
p_s(\mathbf y) \ge 0,
\qquad \forall s \in S,\; \forall \mathbf y \in \{0,1\}^{|A|},
\label{eq:ddp-nonneg}
\end{align}
together with the mass-preservation condition
\begin{align}
\sum_{s \in S} \delta_{sa} = 0,
\qquad \forall a \in A.
\label{eq:ddp-mass}
\end{align}
Condition \eqref{eq:ddp-mass} ensures that activating arc $a$ redistributes
probability mass across scenarios without changing the total mass. Indeed,
summing \eqref{eq:ddp} over all scenarios gives
\[
\sum_{s \in S} p_s(\mathbf y)
=
\sum_{s \in S} \bar p_s
+
\sum_{a \in A} y_a \sum_{s \in S} \delta_{sa}
=
1,
\]
provided that the baseline probabilities satisfy
$\sum_{s \in S} \bar p_s = 1$.

Under \eqref{eq:ddp}, the expected recourse term in the objective becomes
\begin{align}
\sum_{s \in S} p_s(\mathbf y)\, Q_s(\mathbf y,\mathbf W)
&=
\sum_{s \in S}
\left(
\bar p_s + \sum_{a \in A} \delta_{sa} y_a
\right) Q_s(\mathbf y,\mathbf W) \notag \\
&=
\sum_{s \in S} \bar p_s Q_s(\mathbf y,\mathbf W)
+
\sum_{s \in S}\sum_{a \in A} \delta_{sa}\, y_a Q_s(\mathbf y,\mathbf W).
\label{eq:ddp-expand}
\end{align}
Thus, affine decision-dependent probabilities introduce bilinear products
of the form $y_a Q_s$. Analogous bilinear terms arise in the mean--CVaR
extension through products of the form $y_a \xi_s$. Since $y_a$ is binary
and both $Q_s$ and $\xi_s$ are bounded continuous variables, these
products admit an exact mixed-integer linear reformulation via McCormick
linearization, as established in \Cref{prop:milp-reform}.

The affine specification \eqref{eq:ddp} is attractive for two reasons.
First, it is interpretable: the coefficients $\delta_{sa}$ directly encode
how individual arc activations shift exposure toward or away from each
scenario. Second, it preserves tractability, since the resulting
nonlinearities remain binary--continuous bilinear terms that can be
linearized exactly.

\subsection{Mean--CVaR Objective}
\label{sec:cvar}

The baseline model in \Cref{sec:base-model} is risk-neutral: it minimizes
first-stage design cost plus the expected second-stage recourse cost.
While appropriate when average performance is the only concern, such an
objective may underweight rare but severe disruption outcomes. In a
maritime energy-security setting, however, the planner is typically
concerned not only with expected cost, but also with protection against
high-impact tail events such as major chokepoint closures or correlated
multi-corridor disruptions. To capture this tradeoff, we replace the
risk-neutral objective by a mean--CVaR objective.

Specifically, we minimize a convex combination of expected recourse cost
and the conditional value-at-risk (CVaR) of recourse cost. The resulting
objective is
\begin{align}
\min \; Z^{\mathrm{CVaR}}
&=
\sum_{a \in A} f_a y_a
+
\sum_{i \in N}\sum_{c \in C} h_c W_{ic}
+
(1-\lambda)\sum_{s \in S} p_s(\mathbf y)\, Q_s
+
\lambda \left[
\nu + \frac{1}{1-\alpha}\sum_{s \in S} p_s(\mathbf y)\,\xi_s
\right].
\label{eq:obj-cvar}
\end{align}

The first two terms are the deterministic first-stage design costs,
exactly as in the baseline formulation. The third term is the expected
second-stage recourse cost under the scenario distribution
$p_s(\mathbf y)$. The final term is the CVaR component, written in the
standard Rockafellar--Uryasev auxiliary-variable form \cite{rockafellar2000}.

The parameter $\lambda \in [0,1]$ controls the degree of risk aversion.
When $\lambda = 0$, the model reduces to the risk-neutral formulation.
When $\lambda = 1$, the planner optimizes purely with respect to CVaR.
Intermediate values of $\lambda$ generate a tradeoff between average
performance and tail-risk protection.

The parameter $\alpha \in (0,1)$ is the CVaR confidence level. Larger
values of $\alpha$ place greater emphasis on the worst $(1-\alpha)$
fraction of outcomes. For example, $\alpha = 0.9$ means that the model
protects against the average cost incurred in the worst 10\% of the
scenario distribution.

To express CVaR within a tractable optimization model, we introduce the
auxiliary variable $\nu \in \mathbb R$, which plays the role of the
value-at-risk threshold, together with nonnegative excess variables
$\xi_s$ for each scenario $s \in S$. These are linked by
\begin{align}
\xi_s &\ge Q_s - \nu, && \forall s \in S,
\label{eq:cvar-excess} \\
\xi_s &\ge 0, && \forall s \in S.
\label{eq:cvar-nonneg}
\end{align}

At optimality, \eqref{eq:cvar-excess}--\eqref{eq:cvar-nonneg} imply
\[
\xi_s = \max\{Q_s-\nu,0\},
\]
so $\xi_s$ measures the amount by which scenario-$s$ recourse cost
exceeds the threshold $\nu$. Consequently, the quantity
\[
\nu + \frac{1}{1-\alpha}\sum_{s \in S} p_s(\mathbf y)\,\xi_s
\]
is exactly the CVaR of the recourse cost distribution at confidence level
$\alpha$. This representation is especially attractive because it
preserves linearity once the decision-dependent probability terms are
handled through the McCormick reformulation described in
\Cref{prop:milp-reform}.

The mean--CVaR formulation therefore yields a planner-facing objective
that balances operational efficiency against resilience to tail events.
In the empirical study, we set $\lambda = 0.5$ and $\alpha = 0.95$,
corresponding to a moderately risk-averse planner who places equal
weight on expected performance and 95\%-tail protection.

\section{Tractability and Decomposition Structure}
\label{sec:structural}

We establish three results that underpin the solution algorithm (which will be presented in the next section). The
first reformulates the bilinear interaction between decision-dependent
probabilities and the recourse cost as an exact MILP via
binary--continuous McCormick linearization. The second establishes
that, with the McCormick reformulation in place, the recourse problem
remains separable across scenarios and the CVaR component does not
break this separability. The third uses both to deduce finite
convergence of the Benders decomposition that exploits the separable
structure. The three results are tractability statements rather than
deep structural theorems; their role is to certify that the
five-feature integration introduced in \Cref{sec:model} can be solved
exactly without resorting to heuristic decomposition.

\begin{proposition}[Exact MILP reformulation under affine decision-dependent probabilities]
\label{prop:milp-reform}
Under the affine decision-dependent probability model
\begin{align}
p_s(\mathbf{y}) = \bar p_s + \sum_{a \in A} \delta_{sa} y_a,
\qquad \forall s \in S,
\label{eq:ddp-recall}
\end{align}
the mean--CVaR objective \eqref{eq:obj-cvar} admits an exact mixed-integer
linear reformulation.

In particular, introduce auxiliary variables
\[
w_{as} = y_a Q_s, \qquad v_{as} = y_a \xi_s,
\qquad \forall a \in A,\; s \in S,
\]
and suppose valid upper bounds
\[
0 \le Q_s \le M_{as}^Q, \qquad 0 \le \xi_s \le M_{as}^{\xi}
\]
are available. Then the bilinear products can be represented exactly by
the McCormick inequalities
\begin{align}
w_{as} &\ge Q_s - M_{as}^Q(1-y_a), &
v_{as} &\ge \xi_s - M_{as}^{\xi}(1-y_a),
\label{eq:mc-lb} \\
w_{as} &\le Q_s, &
v_{as} &\le \xi_s,
\label{eq:mc-ub1} \\
w_{as} &\le M_{as}^Q y_a, &
v_{as} &\le M_{as}^{\xi} y_a,
\label{eq:mc-ub2} \\
w_{as} &\ge 0, &
v_{as} &\ge 0.
\label{eq:mc-nn}
\end{align}
Consequently, the full objective can be written as a linear function of
\[
(\mathbf y,\mathbf W,\mathbf Q,\boldsymbol{\xi},
\mathbf w,\mathbf v,\nu).
\]
\end{proposition}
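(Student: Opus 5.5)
The argument has two parts. First, expand the objective \eqref{eq:obj-cvar} using \eqref{eq:ddp-recall}, exactly as in \eqref{eq:ddp-expand}. The third term becomes
\[
(1-\lambda)\sum_s \bar p_s Q_s + (1-\lambda)\sum_s\sum_a \delta_{sa}\, y_a Q_s ,
\]
and the CVaR term becomes
\[
\lambda\nu + \tfrac{\lambda}{1-\alpha}\sum_s \bar p_s\xi_s + \tfrac{\lambda}{1-\alpha}\sum_s\sum_a \delta_{sa}\, y_a\xi_s .
\]
Here $Q_s$ is treated as a decision variable: it is the epigraph variable for the scenario recourse LP \eqref{eq:recourse}, bounded below by that LP's objective. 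The only nonlinear terms are therefore the products $y_aQ_s$ and $y_a\xi_s$. Replacing them with $w_{as}$ and $v_{as}$ gives an objective that is linear in $(\mathbf y,\mathbf W,\mathbf Q,\boldsymbol\xi,\mathbf w,\mathbf v,\nu)$. The remaining constraints, \eqref{eq:flow-balance}--\eqref{eq:corridor-cap} and \eqref{eq:cvar-excess}--\eqref{eq:cvar-nonneg}, are already linear in these variables.

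Second, prove exactness pointwise: for each fixed binary $y_a$ and each $Q_s\in[0,M^Q_{as}]$, the set defined by \eqref{eq:mc-lb}--\eqref{eq:mc-nn} is the singleton $\{y_aQ_s\}$.
\begin{itemize}
\item If $y_a=0$: \eqref{eq:mc-ub2} gives $w_{as}\le 0$, and \eqref{eq:mc-nn} gives $w_{as}\ge0$, so $w_{as}=0$. Constraint \eqref{eq:mc-lb} reads $w_{as}\ge Q_s-M^Q_{as}$, and \eqref{eq:mc-ub1} reads $w_{as}\le Q_s$. Both are slack by the bound $0\le Q_s\le M^Q_{as}$.
\item If $y_a=1$: \eqref{eq:mc-lb} and \eqref{eq:mc-ub1} force $w_{as}=Q_s$. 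The constraints \eqref{eq:mc-ub2} and \eqref{eq:mc-nn} are implied by the bounds.
\end{itemize}
The same argument applies verbatim to $v_{as}$, with $M^\xi_{as}$ in place of $M^Q_{as}$. Hence the projection of the feasible region of the linearized problem onto the original variables coincides with the original feasible region, and the objective values agree at corresponding points. Optimal values and optimal solutions are therefore preserved. Feasibility of the original problem implicitly requires the added bound constraints $Q_s\le M^Q_{as}$ and $\xi_s\le M^\xi_{as}$ to be nonbinding at some optimum, and that is the role of the "valid upper bounds" hypothesis.

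The main subtlety is the sign of $\delta_{sa}$. Because $\delta_{sa}$ can be negative, the minimization may push $w_{as}$ or $v_{as}$ in either direction. This is why both the lower and the upper McCormick inequalities are needed, rather than only the one-sided version; the pointwise exactness above makes the sign irrelevant. A second issue is that $Q_s$ appears as an epigraph variable and not as the value function itself. Since $p_s(\mathbf y)\ge 0$ by \eqref{eq:ddp-nonneg} and $\lambda\in[0,1]$, the effective coefficient of $Q_s$ in the original objective is nonnegative. Specifically, once $w_{as}=y_aQ_s$ is substituted, that coefficient is $(1-\lambda)p_s(\mathbf y)$ plus the CVaR contribution through $\xi_s$. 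Consequently an optimal solution may take $Q_s$ equal to the recourse value without loss of generality, which preserves equivalence with \eqref{eq:obj-cvar}.

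It remains to exhibit valid bounds. I would take $M^Q_{as}=\bar Q_s$, defined as
\[
\bar Q_s=\sum_{i\in D}\sum_{c}\pi_c d_{ic}+\sum_i\sum_c h_cL_{ic}+\sum_{a,c}c^s_{ac}\bar u^s_{ac}.
\]
This is finite, it bounds the recourse value of every feasible design, and nonnegativity follows from nonnegative costs. For $\xi_s$, I would restrict $\nu\in[0,\max_s\bar Q_s]$ without loss of optimality, since the CVaR-optimal $\nu$ is a quantile of $\{Q_s\}$. Then $\xi_s=\max\{Q_s-\nu,0\}\le \bar Q_s$, so $M^\xi_{as}=\bar Q_s$ is valid.
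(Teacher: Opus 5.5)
Your proposal is correct and follows the same route as the paper: substitute the affine $p_s(\mathbf y)$, isolate the binary--continuous products $y_aQ_s$ and $y_a\xi_s$, and replace them by McCormick variables, with the $y_a\in\{0,1\}$ case check, the sign-of-$\delta_{sa}$ remark, and the epigraph argument for $Q_s$ made explicit where the paper only asserts exactness. Your last paragraph is not needed, since the proposition assumes the bounds; also, as written $\bar Q_s$ is not valid in general, because the corridor-cap ratio constraints can force $u^s_{ic}>d_{ic}$. Summing \eqref{eq:flow-balance} over $i\in N$ gives $\sum_i u^s_{ic}\le\sum_i(d_{ic}+L_{ic})$, so adding $\sum_{i,c}\pi_c L_{ic}$ to $\bar Q_s$ repairs it.
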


\begin{proof}
Substituting \eqref{eq:ddp-recall} into the mean--CVaR objective
\eqref{eq:obj-cvar} yields terms of the form
\[
y_a Q_s
\quad\text{and}\quad
y_a \xi_s,
\]
arising from the products $p_s(\mathbf y)Q_s$ and
$p_s(\mathbf y)\xi_s$. Since $y_a \in \{0,1\}$ is binary and both
$Q_s$ and $\xi_s$ are bounded continuous variables, each such product
admits an exact McCormick linearization. The constraints
\eqref{eq:mc-lb}--\eqref{eq:mc-nn} enforce
\[
w_{as}=y_aQ_s, \qquad v_{as}=y_a\xi_s
\]
exactly. Replacing every occurrence of $y_aQ_s$ and $y_a\xi_s$ in the
objective by $w_{as}$ and $v_{as}$ therefore yields an equivalent MILP
reformulation. 
\end{proof}

\begin{proposition}[Scenario-wise decomposability under mean--CVaR]
\label{prop:cvar-decomp}
For fixed first-stage decisions $(\mathbf{y}, \mathbf{W}, \nu)$, the
second-stage problem remains separable across scenarios. In particular,
for each scenario $s \in S$, the associated recourse variables and CVaR
excess variable $\xi_s$ can be optimized independently of all other
scenarios.
\end{proposition}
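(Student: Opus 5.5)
The plan is to fix $(\mathbf y,\mathbf W,\nu)$ and show that the restricted optimization problem splits into a sum of independent scenario subproblems over disjoint variable blocks.

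\textbf{Step 1: the objective is additively separable.} Fixing $\mathbf y$ fixes every probability $p_s(\mathbf y)=\bar p_s+\sum_a\delta_{sa}y_a$; by \eqref{eq:ddp-nonneg} each is a nonnegative constant. Equivalently, after the reformulation of \Cref{prop:milp-reform}, the McCormick constraints \eqref{eq:mc-lb}--\eqref{eq:mc-nn} force $w_{as}=y_aQ_s$ and $v_{as}=y_a\xi_s$ with $y_a$ known, so these variables carry no coupling. Drop the constants $\sum_a f_ay_a+\sum_{i,c}h_cW_{ic}+\lambda\nu$. What remains is
\[
\sum_{s\in S} p_s(\mathbf y)\Bigl[(1-\lambda)Q_s+\tfrac{\lambda}{1-\alpha}\xi_s\Bigr],
\]
a nonnegatively weighted sum of terms. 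Each term involves only the scenario-$s$ block $(\mathbf x^s,\mathbf I^s,\mathbf u^s,\xi_s)$, where $Q_s$ is the scenario-$s$ recourse cost \eqref{eq:recourse}.

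\textbf{Step 2: the constraints are block-diagonal.} I will check each constraint family in turn.
\begin{itemize}
\item Flow balance \eqref{eq:flow-balance}: $\mathbf W$ enters only as a fixed right-hand side.
\item Capacity and admissibility \eqref{eq:admiss-tight}: $\mathbf y$ enters only as a fixed right-hand side.
\item End-of-period storage \eqref{eq:storage}: indexed by $s$ alone.
\item Corridor cap \eqref{eq:corridor-cap}: involves only scenario-$s$ flows.
\item CVaR constraints \eqref{eq:cvar-excess}--\eqref{eq:cvar-nonneg}: with $\nu$ fixed, each links only $\xi_s$ and the scenario-$s$ cost.
\end{itemize}
Hence the feasible set is a Cartesian product $\prod_s \mathcal F_s(\mathbf y,\mathbf W,\nu)$.

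\textbf{Step 3: minimize term by term.} The minimum of a sum of nonnegatively weighted functions over a product set equals the sum of the separate minima. If $p_s(\mathbf y)=0$, any feasible point of that block is optimal, so nothing is lost. I would also record the explicit solution of each block. The scenario-$s$ subproblem minimizes $(1-\lambda)Q_s+\tfrac{\lambda}{1-\alpha}\xi_s$ subject to $\xi_s\ge Q_s-\nu$ and $\xi_s\ge 0$. Both coefficients are nonnegative and the objective is monotone in $Q_s$. Therefore the optimum takes $Q_s$ equal to the recourse LP value and $\xi_s=\max\{Q_s-\nu,0\}$. This is exactly the per-scenario structure used in Benders.

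\textbf{Main obstacle.} The one subtle point is to confirm that $\nu$ is the only cross-scenario coupling in the CVaR term. The argument depends on treating $\nu$ as part of the master's decision vector, exactly as the statement does.

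A second point needs a brief justification. In the joint formulation, $Q_s$ appears as a variable bounded below by the recourse objective. Replacing it by the LP optimal value is valid because the objective is nondecreasing in $Q_s$, both directly and through the constraint $\xi_s\ge Q_s-\nu$.
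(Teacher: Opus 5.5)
Your proposal is correct and follows essentially the same route as the paper's proof. Both fix $(\mathbf y,\mathbf W,\nu)$ so that the $p_s(\mathbf y)$ and $\lambda\nu$ become constants, observe that every recourse and CVaR constraint involves only scenario-$s$ variables, and conclude blockwise that $\xi_s=\max\{Q_s-\nu,0\}$. Your extra checks (nonnegativity of $p_s(\mathbf y)$ from the validity condition, the product-set argument, and the monotonicity justification for replacing $Q_s$ by the recourse LP value) simply make explicit what the paper's proof leaves implicit.
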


\begin{proof}
Fix $(\mathbf{y}, \mathbf{W}, \nu)$. Then the decision-dependent
probabilities $p_s(\mathbf{y})$ are constants, and the first-stage
variables no longer appear as optimization variables in the recourse
stage.

The uncertain part of the objective is
\[
(1-\lambda)\sum_{s\in S} p_s(\mathbf{y}) Q_s
\;+\;
\frac{\lambda}{1-\alpha}\sum_{s\in S} p_s(\mathbf{y}) \xi_s,
\]
since the term $\lambda \nu$ is constant once $\nu$ is fixed. For each
scenario $s$, the CVaR constraints are
\[
\xi_s \ge Q_s - \nu, \qquad \xi_s \ge 0.
\]
Hence, at optimality,
\[
\xi_s = \max\{Q_s-\nu,0\},
\]
so the contribution of scenario $s$ to the objective is
\[
(1-\lambda)p_s(\mathbf{y})Q_s
\;+\;
\frac{\lambda}{1-\alpha}p_s(\mathbf{y})\max\{Q_s-\nu,0\}.
\]

Moreover, the recourse constraints for scenario $s$ (flow balance,
capacity, admissibility, storage, and corridor-cap constraints) involve
only scenario-$s$ variables. No recourse variable from scenario $s$ appears
in the constraints of any other scenario $s' \neq s$.

Therefore, once $(\mathbf{y}, \mathbf{W}, \nu)$ is fixed, the second-stage
optimization decomposes into independent scenario subproblems, one for
each $s \in S$. The only cross-scenario coupling is through the shared
first-stage decisions, which are handled in the master problem.
\end{proof}

\begin{proposition}[Finite convergence of Benders decomposition]
\label{prop:benders_finite}
The Benders decomposition algorithm described in \Cref{sec:algorithm},
using optimality cuts generated from feasible scenario subproblems and
feasibility cuts generated from infeasible scenario subproblems,
terminates after finitely many iterations and returns an optimal solution
of the extensive-form model.
\end{proposition}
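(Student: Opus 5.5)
We will follow the classical finite-convergence argument for Benders decomposition with binary complicating variables, adapted to the decision-dependent, mean--CVaR structure. First, by \Cref{prop:milp-reform} we work with the exact MILP reformulation, and by \Cref{prop:cvar-decomp} the second stage splits, for fixed $(\mathbf y,\mathbf W,\nu)$, into independent scenario LPs. The first step is to fix the master/subproblem split. The master problem carries $(\mathbf y,\mathbf W,\nu)$ together with one epigraph variable $\eta_s$ per scenario, standing for the scenario value $Q_s(\mathbf y,\mathbf W)$. The decision-dependent weights $p_s(\mathbf y)$ multiply these epigraph variables. We therefore move the products $y_a\eta_s$ and $y_a\xi_s$ into the master and linearize them there by the McCormick constraints \eqref{eq:mc-lb}--\eqref{eq:mc-nn}. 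The CVaR excess is written as $\xi_s\ge \eta_s-\nu$, $\xi_s\ge0$. With this choice each subproblem is a pure LP whose right-hand side is affine in $(\mathbf y,\mathbf W)$ through \eqref{eq:flow-balance}, \eqref{eq:admiss-tight} and \eqref{eq:storage}. Its cost vector is independent of the first-stage variables.

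The second step is the polyhedral finiteness argument. For each $s$, the dual feasible region of the scenario LP is a fixed polyhedron $\Pi_s$, independent of $(\mathbf y,\mathbf W)$, with finitely many extreme points and extreme rays. Every optimality cut $\eta_s\ge \pi^\top(h_s - T_s(\mathbf y,\mathbf W))$ comes from an extreme point $\pi$ of $\Pi_s$, and every feasibility cut comes from an extreme ray. Hence only finitely many distinct cuts exist. By LP duality, $Q_s$ is the maximum over all extreme-point cuts, and feasibility is equivalent to all extreme-ray cuts holding. So the master with the full cut family is an exact reformulation of the extensive form.

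The third step is the standard termination argument. At each iteration, either the master solution $(\hat{\mathbf y},\hat{\mathbf W},\hat\nu,\hat\eta)$ satisfies $\hat\eta_s\ge Q_s(\hat{\mathbf y},\hat{\mathbf W})$ for all $s$ and the subproblems are feasible, in which case the upper and lower bounds coincide and the algorithm stops optimal. Or some scenario returns a new cut that strictly cuts off the current point. A cut from a given extreme point or ray cannot be violated twice once added, so no cut repeats. Finiteness of the cut set then bounds the number of iterations. The corridor-group cuts are valid inequalities for the extensive form and do not affect this argument. The master is bounded below because $Q_s\ge0$ and all costs are nonnegative. The master is also always solvable, since $\mathbf y$ is binary and $\mathbf W$ lies in the box given by \eqref{eq:storage}.

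The main obstacle is that $\mathbf W$ is continuous, so termination cannot be argued by enumerating first-stage points. It must rest entirely on the finiteness of the dual vertex and ray sets. A second difficulty is that the decision-dependent weights multiply the cut variables. We would verify that the McCormick-linearized weighting is exact at every binary $\mathbf y$. This requires the bounds $M^Q_{as}$ to be valid upper bounds on $\eta_s$; we would take, say, the total shortage penalty $\sum_{i,c}\pi_c d_{ic}$ plus the maximal transport and holding cost. Exactness then guarantees that the master's optimal value is a valid lower bound. Finally, because some $\delta_{sa}<0$, the weights $p_s(\mathbf y)$ can vanish for some scenarios. In that case the master gains no incentive to push $\eta_s$ up, so we would state termination through the check $\hat\eta_s\ge Q_s$ for all $s$ rather than through a weighted gap.
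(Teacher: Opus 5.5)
Your proposal is correct and follows the same route as the paper's proof sketch: each scenario dual polyhedron has finitely many extreme points and rays, the master remains a relaxation of the extensive form, and every non-terminating iteration adds a violated cut, so finiteness of the cut family forces termination. You also make explicit two points the paper leaves implicit (the dual polyhedron does not depend on $(\mathbf y,\mathbf W)$, and a cut once added can never be violated again), and your proposed switch to an unweighted termination check is unnecessary, because $p_s(\hat{\mathbf y})\ge 0$ implies that once $\hat\eta_s\ge Q_s(\hat{\mathbf y},\hat{\mathbf W})$ for all $s$, the gap test of Algorithm~\ref{alg:benders} already closes.
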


\begin{proof}[Proof sketch]
By \Cref{prop:cvar-decomp}, for fixed first-stage decisions
$(\mathbf{y},\mathbf{W},\nu)$, the second-stage problem decomposes into
independent scenario subproblems. Each such subproblem is a linear
program. Hence, its dual feasible region is a polyhedron with finitely
many extreme points and extreme rays.

Feasible scenario subproblems generate valid Benders optimality cuts,
while infeasible scenario subproblems generate valid feasibility cuts.
Each cut is satisfied by every feasible solution of the extensive form,
so the master problem remains a relaxation of the extensive-form model
throughout the algorithm.

At each iteration, the current master solution is evaluated by solving
the scenario subproblems. If all subproblems are feasible and the
current recourse approximation is exact, then the current master
solution is optimal for the extensive form and the algorithm terminates.
Otherwise, at least one violated optimality or feasibility cut is
identified and added to the master problem. Since the number of distinct
cuts induced by the finitely many dual extreme points and rays is finite,
this process cannot continue indefinitely.

Therefore, after finitely many iterations, no further violated cut
exists, and the master solution coincides with an optimal solution of
the extensive-form model.
\end{proof}

\section{Solution Algorithm}
\label{sec:algorithm}

The extensive-form model can be solved directly on the case-study
instances, but its structure is naturally amenable to decomposition.
We therefore solve the model by Benders decomposition, exploiting the
scenario-wise separability established in
\Cref{prop:cvar-decomp}. The key idea is to keep the first-stage design
variables in a master problem and evaluate the true second-stage
recourse cost through independent scenario subproblems.

Because the scenario probabilities $p_s(\mathbf y)$ are
decision-dependent, the master objective contains bilinear products such
as $p_s(\mathbf y)\phi_s$ and $p_s(\mathbf y)\eta_s$. As in
\Cref{prop:milp-reform}, these terms are handled exactly through
McCormick linearization, so the resulting master problem becomes a mixed-integer linear
program.

\subsection{Master Problem}
\label{sec:master}

The master problem determines the first-stage design decisions
$(\mathbf y,\mathbf W,\nu)$ together with scenario-wise recourse
underestimators $\phi_s \ge 0$ and CVaR excess variables
$\eta_s \ge 0$, for all $s \in S$:
\begin{align}
\min \;
& \sum_{a \in A} f_a y_a
+ \sum_{i \in N}\sum_{c \in C} h_c W_{ic}
+ (1-\lambda)\sum_{s \in S} p_s(\mathbf y)\,\phi_s
+ \lambda\left[
\nu + \frac{1}{1-\alpha}\sum_{s \in S} p_s(\mathbf y)\,\eta_s
\right]
\label{eq:master-obj}
\end{align}
subject to
\begin{align}
\eta_s &\ge \phi_s - \nu,
&& \forall s \in S,
\label{eq:master-cvar-link} \\
\eta_s &\ge 0,
&& \forall s \in S,
\label{eq:master-cvar-nonneg}
\end{align}
together with the first-stage domain constraints, storage-capacity
constraints, probability-validity conditions for $p_s(\mathbf y)$, and
all Benders cuts accumulated up to the current iteration.

The variable $\phi_s$ is a master-level lower approximation of the true
scenario recourse cost $Q_s(\mathbf y,\mathbf W)$. The variable
$\eta_s$ plays the corresponding role for the CVaR excess term. At
convergence, the master satisfies
\[
\phi_s = Q_s(\mathbf y,\mathbf W),
\qquad
\eta_s = \max\{Q_s(\mathbf y,\mathbf W)-\nu,0\},
\qquad \forall s \in S.
\]

\subsection{Scenario Subproblems}
\label{sec:subproblem}

Given a candidate first-stage solution
$(\hat{\mathbf y},\hat{\mathbf W},\hat\nu)$ from the master, the
scenario-$s$ subproblem is the recourse LP
\begin{align}
Q_s(\hat{\mathbf y},\hat{\mathbf W})
=
\min \;
& \sum_{a \in A}\sum_{c \in C} c_{ac}^s x_{ac}^s
+ \sum_{i \in N}\sum_{c \in C} h_c I_{ic}^s
+ \sum_{i \in D}\sum_{c \in C} \pi_c u_{ic}^s
\label{eq:sub-obj}
\end{align}
subject to the scenario-$s$ recourse constraints:
flow balance, arc-capacity linking, admissibility, storage-capacity,
and corridor-dependence constraints, with
$(\mathbf y,\mathbf W)$ fixed at
$(\hat{\mathbf y},\hat{\mathbf W})$.

Since the first-stage decisions are fixed, each scenario subproblem is a
linear program. Its dual optimal solution yields a Benders optimality
cut that lower-bounds the recourse function in the master problem. In
generic form, such a cut can be written as
\begin{align}
\phi_s
\;\ge\;
\Theta_s
+ \sum_{a \in A} \Gamma_{as} y_a
+ \sum_{i \in N}\sum_{c \in C} \Lambda_{ics} W_{ic},
\label{eq:benders-opt-cut-generic}
\end{align}
where the coefficients
$(\Theta_s,\Gamma_{as},\Lambda_{ics})$ are obtained from the dual
multipliers of the scenario-$s$ LP. If the scenario subproblem is
infeasible, then a dual extreme ray generates a Benders feasibility cut,
which is added to the master problem to exclude the current first-stage
solution.

\subsection{Corridor-Based Strengthening Cuts}
\label{sec:valid-ineq}

Standard Benders cuts are generated independently by scenario. We
further strengthen the master relaxation by exploiting the
corridor-based failure structure of the scenario set.

\begin{remark}[Group-failure valid inequality]
\label{prop:group-cut}
Let $\ell \in \mathcal L$ be a corridor, and let
$S_\ell \subseteq S$ denote the set of scenarios in which corridor
$\ell$ is disrupted as a group. Then, for each $s \in S_\ell$, the
master problem can be strengthened by the valid inequality
\begin{align}
\phi_s \ge \underline Q_s^\ell
+ \sum_{a \in \ell} \mu_a^\ell (1-y_a),
\qquad \forall s \in S_\ell,
\label{eq:group-cut}
\end{align}
where $\underline Q_s^\ell$ is a valid lower bound on the recourse cost
in scenario $s$ when all arcs in corridor $\ell$ are activated, and
$\mu_a^\ell \ge 0$ is the marginal increase in recourse cost caused by
deactivating arc $a$ under corridor-$\ell$ disruption.
\end{remark}

These cuts exploit structural information shared across scenarios that
involve common corridor failures. In practice, they tighten the master
problem and can reduce the number of Benders iterations required for
convergence.

\subsection{Algorithm Summary}
\label{sec:alg-summary}

\begin{algorithm}[H]
\caption{Benders decomposition with corridor-based strengthening cuts}
\label{alg:benders}
\begin{algorithmic}[1]
\State \textbf{Initialize:}
LB $\gets -\infty$, UB $\gets +\infty$, tolerance $\epsilon > 0$.
\Repeat
    \State Solve the current master problem
    \eqref{eq:master-obj}--\eqref{eq:master-cvar-nonneg}
    and obtain
    $(\hat{\mathbf y},\hat{\mathbf W},\hat\nu,
    \hat{\boldsymbol\phi},\hat{\boldsymbol\eta})$;
    update the lower bound LB.
    \For{each scenario $s \in S$}
        \State Solve the scenario-$s$ subproblem
        \eqref{eq:sub-obj} with fixed
        $(\hat{\mathbf y},\hat{\mathbf W})$.
        \If{the subproblem is feasible}
            \State Obtain the optimal value $Q_s^*$ and the associated dual
            solution.
            \If{$Q_s^* > \hat\phi_s + \epsilon$}
                \State Add the corresponding Benders optimality cut
                \eqref{eq:benders-opt-cut-generic} to the master.
            \EndIf
        \Else
            \State Generate a Benders feasibility cut from a dual extreme
            ray and add it to the master.
        \EndIf
    \EndFor
    \State Generate violated corridor-based strengthening cuts
    \eqref{eq:group-cut} and add them to the master.
    \State Evaluate the current first-stage solution using the true
    scenario recourse values and update the upper bound UB.
\Until{$(\mathrm{UB}-\mathrm{LB})/\max\{1,|\mathrm{UB}|\} < \epsilon$}
\end{algorithmic}
\end{algorithm}

\section{Computational Study}
\label{sec:computational}

The computational study below uses a 16-node, 28-arc network calibrated
to Indian maritime energy imports, with four commodities (crude oil,
LNG, LPG, fertilizer) and a nine-scenario disruption library. 
Note that this may sound like a relatively small instance, but what we are interested to study is a "national-level" abstraction where the 16 nodes represent major aggregate hubs rather than every single port.
The national
import anchors (5.5\,mb/d crude, 17.5\,Mt/yr LNG, 23.3\,Mt/yr LPG,
9.2\,Mt/yr fertilizer feedstock) are sourced from EIA, UNCTAD, PPAC,
and Drewry; ISPRL strategic-storage capacities are sourced from PIB
\citep{pib2021spr}; pipeline-bypass capacities (ADCOP 1.5\,mb/d,
Petroline 7\,mb/d) are sourced from \cite{iea2026hormuz} and
\cite{cnbc2026pipelines}; and disruption multipliers are anchored to
UKMTO advisories, EIA Red-Sea-decline analyses, and UNCTAD
post-Houthi-crisis reporting. The full calibration protocol, with each
parameter's source, anchor, and construction rule, is documented in
\ref{app:calibration}; all numerical inputs are stored in CSV
files alongside the implementation, allowing alternative calibrations
without code changes. Risk parameters are set at $\lambda=0.5$ and
$\alpha=0.95$ in the base case and swept in
\Cref{sec:cvar-frontier}. The model is implemented in Pyomo and solved
with GLPK; all experiments run on a single thread of a workstation-
class CPU, with extensive-form solve times under one second on the
calibrated instance.

\subsection{Base-Case Experiment: Full Resilience Model}
\label{sec:base-case}

We begin with a base-case experiment that represents the planner's
current energy-security problem under the full proposed model. The
setting is an import-dependent maritime energy network in which crude
oil, LNG, LPG, and fertilizer feedstock must be delivered to Indian
demand nodes through a set of maritime, pipeline, and direct supply
arcs. Several of these arcs pass through geopolitically sensitive
corridors, including Hormuz and Bab/Suez, whose disruptions may occur
jointly rather than independently.

Before the disruption scenario is realized, the planner chooses which
arcs to activate and how much strategic inventory to pre-position at
eligible nodes. After a scenario is realized, the planner routes
available commodity flows through the surviving network, carries
remaining inventory, and incurs shortage penalties for unmet demand.
Thus, the base-case solution represents a complete here-and-now
resilience policy and its scenario-contingent operational response.

The base-case model includes all three structural features introduced
in the paper: correlated disruption scenarios, affine
decision-dependent probabilities, and mean--CVaR risk aversion. Unless
otherwise stated, we use $\lambda=0.5$ and $\alpha=0.95$, corresponding
to a planner that places equal weight on expected recourse cost and
95\%-tail protection. This experiment establishes the reference design
against which the value-of-modeling and algorithmic experiments are
compared.

\paragraph{First-stage network design}
Figure~\ref{fig:base-network-design} visualizes the optimized
first-stage network design. The highlighted arcs correspond to
activated routes, contracts, or bypass options selected before the
disruption scenario is known. The selected design activates 21 of the
28 candidate arcs, indicating that the planner does not rely only on a
single low-cost maritime corridor. Instead, the solution builds a
diversified route portfolio containing Gulf routes, direct routes,
pipeline-bypass links, and Cape alternatives. This is consistent with
the role of the first-stage variable $y_a$: it represents a strategic
commitment to route availability before the uncertain disruption state
is realized.

\begin{figure}[t]
    \centering
    \includegraphics[width=\textwidth]{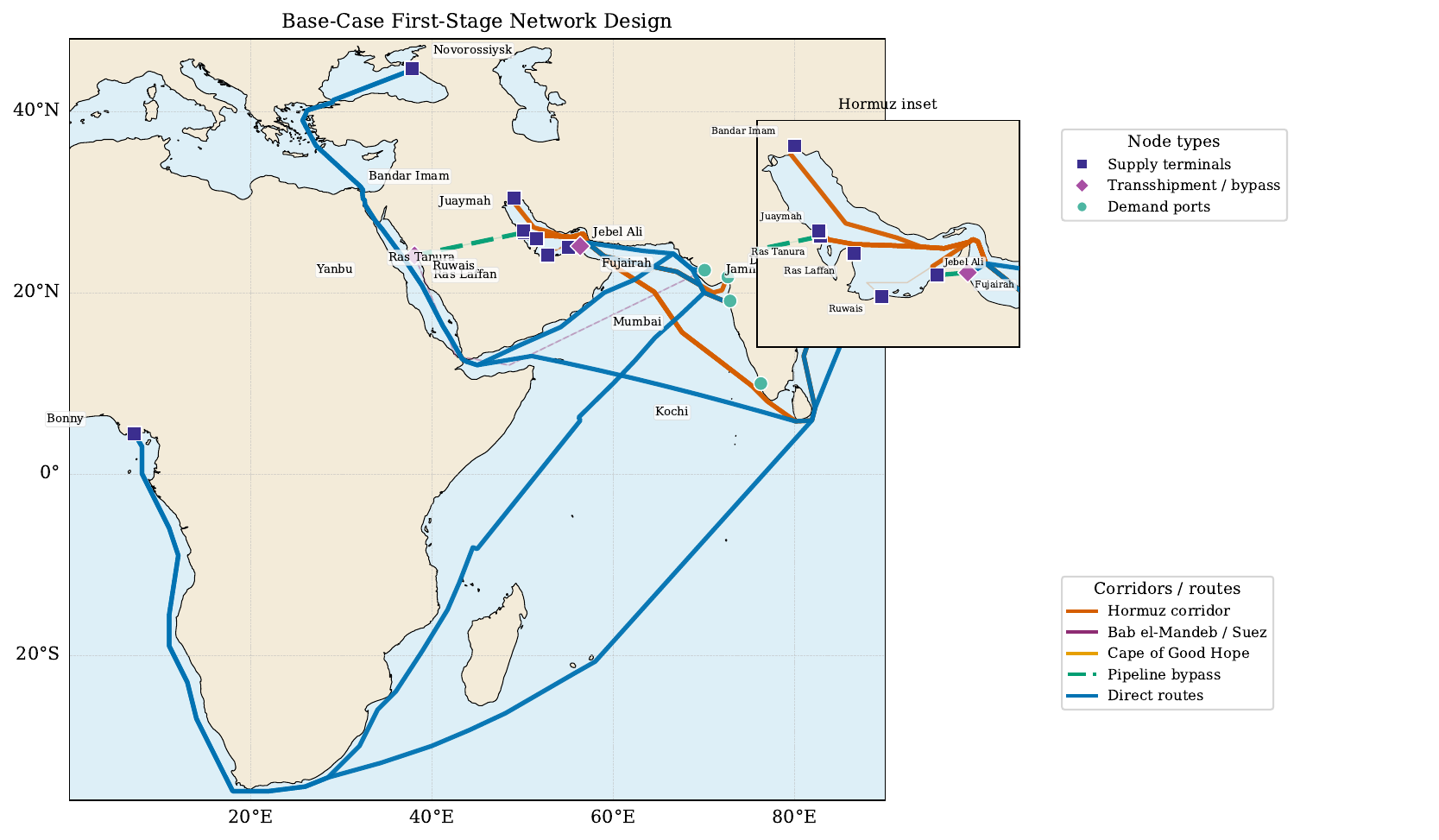}
    \caption{Base-case first-stage network design. Highlighted arcs indicate
    routes activated before the disruption scenario is realized. Node markers
    distinguish supply terminals, transshipment or bypass nodes, and Indian
    demand ports.}
    \label{fig:base-network-design}
\end{figure}

\paragraph{Cost decomposition}
Table~\ref{tab:tab01_base_case} reports the cost decomposition of the
base-case solution. The optimal mean--CVaR objective is 59{,}494.77,
with a first-stage design cost of 1{,}552.00 and an expected recourse
cost of 45{,}673.67. The optimized CVaR$_{0.95}$ value is 70{,}211.88,
which is equal to the VaR threshold in this discrete nine-scenario
instance. Hence, for the calibrated base case, the tail-risk term is
governed by the worst realized scenario in the scenario library. The
extensive form solves in 0.03 seconds, so this instance serves as a
transparent reference case for interpreting the structure of the
solution before moving to the larger value-of-modeling and
decomposition experiments.

\begin{table}
\caption{Base-case solution headline metrics.}
\label{tab:tab01_base_case}
\begin{tabular}{rrrrrrr}
\toprule
objective & fixed\_cost & expected\_recourse & cvar\_0.95 & var\_threshold & n\_arcs\_activated & runtime\_sec \\
\midrule
59494.77 & 1552.00 & 45673.67 & 70211.88 & 70211.88 & 21 & 0.03 \\
\bottomrule
\end{tabular}
\end{table}

\paragraph{Scenario-contingent recourse and rerouting}
The first-stage activation decisions are fixed across all scenarios,
but the second-stage flows are scenario-dependent. Figure~\ref{fig:basecase-rerouting}
illustrates this distinction by comparing the normal operating pattern
with the flow pattern under a disruption scenario. In the normal
scenario, the model routes substantial flow through the conventional
Gulf-to-India corridors and direct supply routes. Under disruption,
the same activated network is used differently: flows shift away from
the disrupted chokepoint and toward direct and Cape alternatives where
capacity remains available. Thus, the ``after-disruption'' network
should be interpreted as a recourse-flow pattern $x^s_{ac}$, not as a
new activation decision.

\begin{figure}[t]
    \centering
    \includegraphics[width=\textwidth]{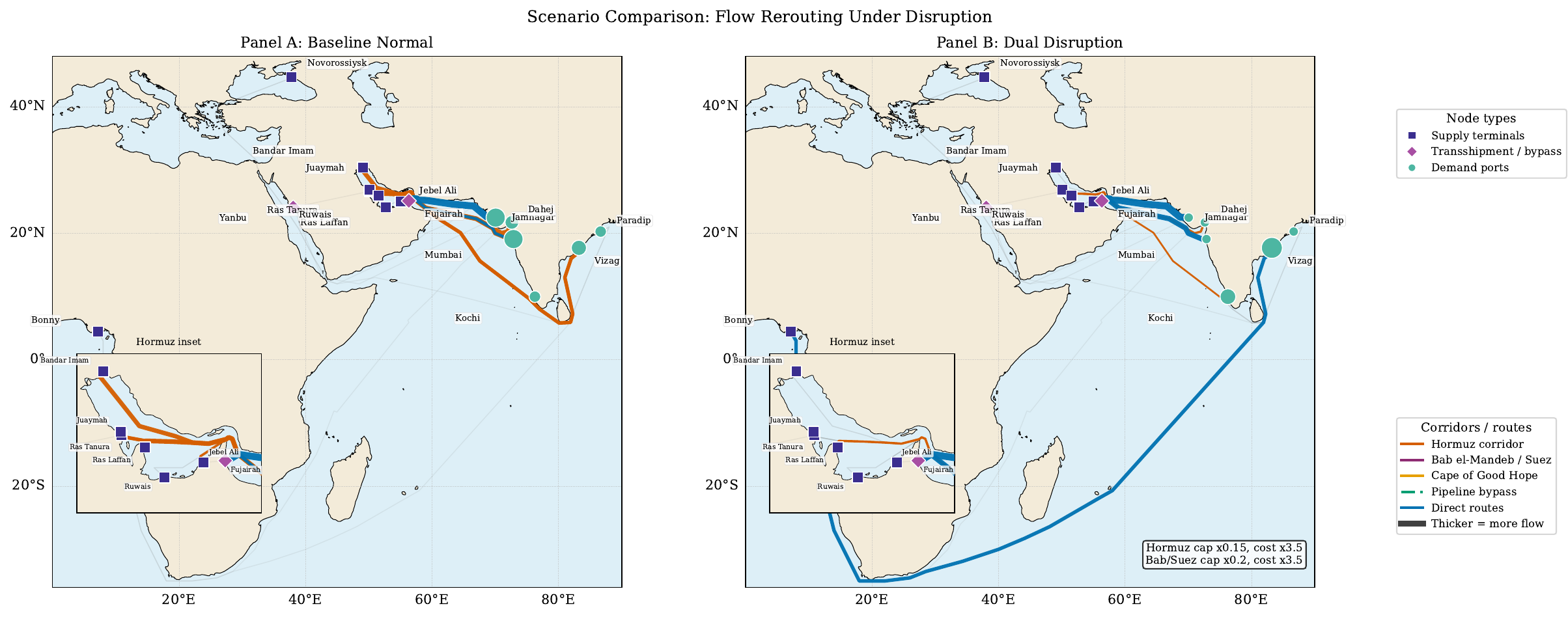}
    \caption{Scenario-contingent recourse flows under the base-case design.
    The left panel shows flows in the normal scenario, while the right panel
    shows flows under the selected disruption scenario. Line width is
    proportional to total arc flow aggregated over commodities.}
    \label{fig:basecase-rerouting}
\end{figure}

The corridor-flow decomposition confirms the same behavior numerically.
In the normal scenario, the solution uses Hormuz and direct routes as
the dominant flow channels. When Hormuz capacity is reduced or closed,
the flow portfolio changes sharply: direct routes and Cape alternatives
absorb a much larger share of system flow, while Hormuz flow either
falls substantially or becomes zero.

\paragraph{Scenario-level losses}
Figure~\ref{fig:scenario-costs} reports scenario-level recourse costs
under the base-case design. The results show that disruption severity
is highly asymmetric across scenarios. Several scenarios can be handled
mainly through rerouting and inventory use, while Hormuz-related
disruptions generate substantial residual shortage penalties. The
highest-cost case is the dual-disruption scenario, with recourse cost
70{,}211.88. Hormuz closure and closure-with-bypass scenarios also
produce large shortage costs. These outcomes indicate that the dominant
residual risk is not generic transport-cost inflation alone, but the
combination of reduced Hormuz capacity, commodity-specific admissibility
restrictions, and limited substitutability of certain energy
commodities.

\begin{figure}[t]
    \centering
    \includegraphics[width=\textwidth]{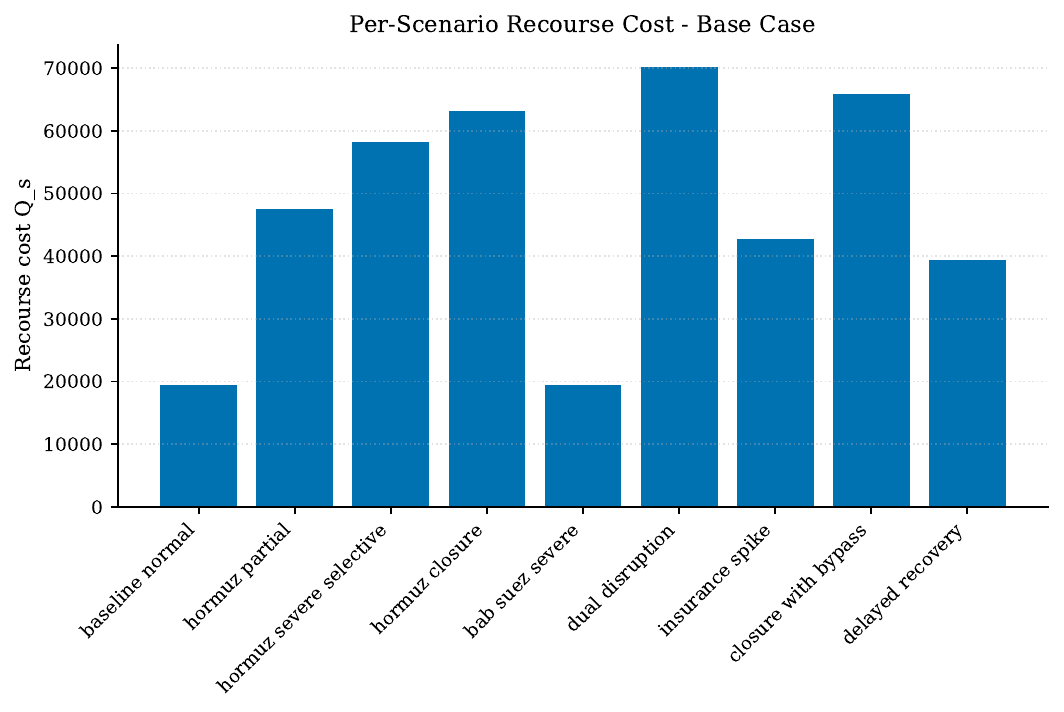}
    \caption{Scenario-level recourse costs under the base-case first-stage
    design.}
    \label{fig:scenario-costs}
\end{figure}

\paragraph{Strategic inventory and commodity-level vulnerability}
Table~\ref{tab:tab01_commodity_outcomes} summarizes the commodity-level
inventory and shortage outcomes. The solution fills available storage
for crude oil, LNG, and LPG, and uses most of the available fertilizer
storage. Crude oil and fertilizer experience no shortage across the
calibrated scenario library, indicating that the selected combination
of strategic reserves and routing alternatives is sufficient to hedge
these commodities in the base case. In contrast, LNG and LPG remain
vulnerable. LPG has the largest expected shortage quantity and the
largest expected shortage cost, identifying it as the most exposed
commodity under the optimized base-case policy. This finding is
consistent with the operational interpretation that LPG has fewer
effective substitution and storage options than crude oil.

Quantitatively, expected LPG shortage (200.84 units, 10{,}042 in expected
shortage cost) is roughly twice the LNG figure (109.31 units, 4{,}372 in
cost) and is concentrated in scenarios that combine Hormuz capacity
reduction with Bab/Suez disruption; crude oil and fertilizer record zero
expected and zero worst-case shortage across the calibrated scenario
library.

\begin{table}[ht]
\centering
\caption{Commodity-level shortage and inventory outcomes.}
\label{tab:tab01_commodity_outcomes}
\small 
\begin{tabularx}{\textwidth}{lXXXXXX}
\toprule
\textbf{Commodity} & \textbf{Exp. Shortage Qty} & \textbf{Exp. Shortage Cost} & \textbf{Worst Case Qty} & \textbf{Total Inv.} & \textbf{Storage Cap.} & \textbf{Inv. Util.} \\
\midrule
Crude      & 0.00   & 0.00     & 0.00   & 2700.00 & 2700.00 & 1.00 \\
LNG        & 109.31 & 4372.48  & 250.00 & 450.00  & 450.00  & 1.00 \\
LPG        & 200.84 & 10042.00 & 350.00 & 400.00  & 400.00  & 1.00 \\
Fertilizer & 0.00   & 0.00     & 0.00   & 700.00  & 800.00  & 0.88 \\
\bottomrule
\end{tabularx}
\end{table}

The optimized policy places crude reserves at major refining or coastal demand nodes, LNG buffers at regasification-linked nodes, and LPG and fertilizer inventories at selected port nodes. These inventory decisions are part of the first-stage design and are therefore chosen before any disruption is observed.

\paragraph{Decision-dependent probabilities in the base case}
For this particular optimized design, the DDU-adjusted probabilities
are numerically close to the baseline probabilities. This should not be
interpreted as removing the relevance of the decision-dependent
probability model. Rather, it means that the selected arc portfolio
induces offsetting probability shifts across scenarios in the calibrated
base case. The role of endogenous probabilities is therefore isolated
more directly in the value-of-DDU experiment, where a design optimized
without decision-dependent probabilities is evaluated under the true
DDU-adjusted distribution.

\paragraph{Takeaway}
Overall, the base-case solution shows that the proposed model produces
an interpretable resilience policy. Before disruption, the planner
activates a diversified set of routes and fills most available
strategic storage. After disruption, the recourse solution reroutes
flows away from impaired chokepoints toward direct and Cape alternatives
where possible. At the commodity level, crude oil and fertilizer are
fully protected in the calibrated scenario library, whereas LNG and
especially LPG retain residual shortage exposure. These results motivate
the subsequent experiments, which isolate the value of stochastic
modeling, correlated disruptions, endogenous probabilities, and
risk-averse planning.

\subsection{Value of Stochastic Modeling}
\label{sec:vss}

The base-case experiment evaluates the full resilience model under the
calibrated disruption distribution. We next ask whether explicitly
modeling scenario uncertainty changes the quality of the first-stage
design. To isolate this effect, we temporarily disable
decision-dependent probabilities and CVaR risk aversion and compare
three risk-neutral planning models.

The first model is the stochastic program (SP), which optimizes a
single first-stage design under the full scenario distribution. The
second is the expected-value (EV) approximation, which replaces the
scenario library by a single probability-weighted average scenario,
solves the resulting deterministic model, and then evaluates the
resulting first-stage design under the original stochastic scenario
distribution. The third is the wait-and-see (WS) benchmark, which solves
a separate deterministic problem for each scenario as if the planner had
perfect advance information. The WS solution is not implementable, but
it provides a perfect-information lower bound.

We define the value of the stochastic solution as
\[
\mathrm{VSS}
=
Z_{\mathrm{EV}}
-
Z_{\mathrm{SP}},
\]
where \(Z_{\mathrm{EV}}\) denotes the cost of the expected-value design
when evaluated under the full scenario distribution. We also report the
expected value of perfect information,
\[
\mathrm{EVPI}
=
Z_{\mathrm{SP}}
-
Z_{\mathrm{WS}}.
\]
A positive VSS indicates that optimizing against an average disruption
state leads to a first-stage design that is suboptimal under the true
distribution of disruption outcomes.

\begin{table}[ht]
\centering
\caption{Value of the stochastic solution and EVPI.}
\label{tab:tab02_voss}
\renewcommand{\theadfont}{\bfseries\small} 
\begin{tabular}{ccccccc}
\toprule
\thead{SP\\Objective} & \thead{EV Design\\Full Dist.} & \thead{WS Perfect\\Info} & \thead{VSS} & \thead{VSS\\(\%)} & \thead{EVPI} & \thead{EVPI\\(\%)} \\
\midrule
47417.02 & 54446.89 & 47406.64 & 7029.87 & 14.83 & 10.38 & 0.02 \\
\bottomrule
\end{tabular}
\end{table}

Table~\ref{tab:tab02_voss} shows that the stochastic-programming
design has objective value \(47{,}417.02\), while the expected-value
design, when evaluated under the full scenario distribution, has
objective value \(54{,}446.89\). The resulting value of the stochastic
solution is \(7{,}029.87\), corresponding to a 14.83\% increase relative
to the stochastic-program objective. This gap is operationally
significant: it shows that planning against a single average disruption
state produces a materially worse first-stage design once the true
scenario distribution is restored.

In contrast, the wait-and-see benchmark has objective value
\(47{,}406.64\), only \(10.38\) below the stochastic-program objective.
The resulting EVPI is 0.02\% of the stochastic-program objective. This
small EVPI indicates that, once the stochastic program is used, the
remaining value of perfect advance information is limited. The expected ordering $Z_{\mathrm{WS}}\le Z_{\mathrm{SP}}\le Z_{\mathrm{EV}}$ holds, with the near equality between WS and SP indicating that the stochastic program is close to the perfect-information lower bound, while the much larger EV objective shows that the deterministic average-scenario approximation fails to preserve the relevant disruption structure. The main value therefore comes from modeling the distribution of possible disruption states, not from perfect ex-post information.

\begin{figure}[t]
    \centering
    \includegraphics[width=0.82\textwidth]{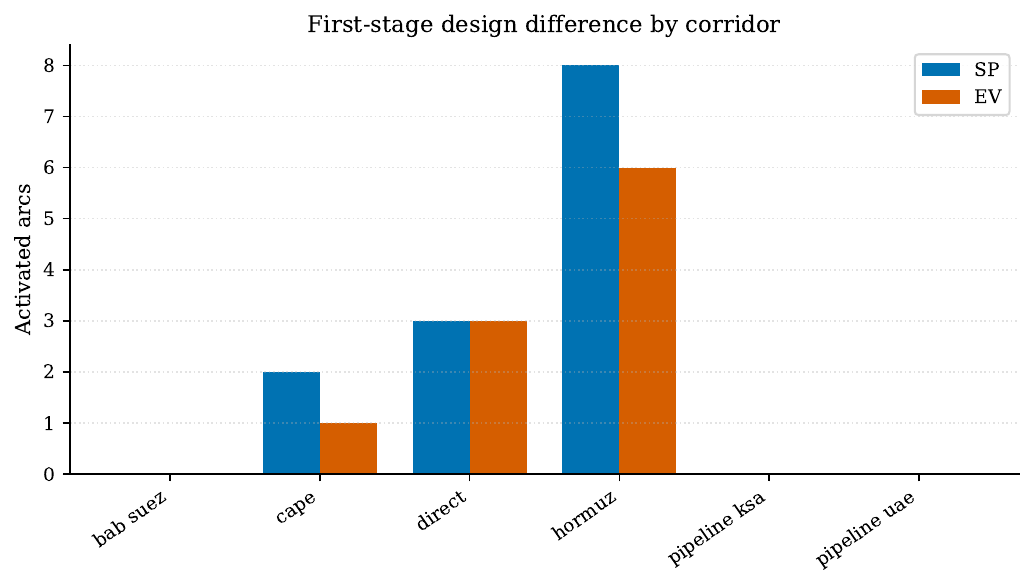}
    \caption{Comparison of first-stage route activations by corridor under
    the stochastic-program and expected-value designs.}
    \label{fig:vss-design-difference}
\end{figure}

Figure~\ref{fig:vss-design-difference} explains the source of the VSS
gap. The stochastic-program and expected-value designs do not activate
the same route portfolio. The EV design is optimized for an averaged
network state and therefore under-represents the value of route
redundancy across distinct disruption realizations. The SP design, by
contrast, internalizes the full distribution of scenario-contingent
capacity and cost changes, leading to a first-stage policy that performs
better when evaluated across the actual scenario library.

Overall, this experiment confirms that stochastic modeling is not a
cosmetic extension of a deterministic network design model. The
expected-value approximation produces a substantially worse
implementable policy, while the stochastic program nearly closes the
gap to the perfect-information benchmark. This justifies the use of a
scenario-based stochastic formulation before introducing the additional
features of correlated disruptions, endogenous probabilities, and
risk-averse CVaR planning.

\subsection{Value of Modeling Correlated Chokepoint Disruptions}
\label{sec:vmc}
 
Experiment~\ref{sec:vss} showed that optimizing against the full
scenario distribution produces a substantially better implementable
design than optimizing against a single expected-value scenario. We now
ask whether the dependence structure inside the scenario distribution
also changes the first-stage policy. This is relevant because maritime
energy disruptions are not necessarily independent: a regional
escalation can simultaneously affect Hormuz, Bab/Suez, insurance
conditions, rerouting costs, and tanker availability.
 
To isolate this effect, we compare the calibrated correlated scenario
distribution with an independent-marginal counterfactual. The
counterfactual preserves the marginal disruption probabilities of the
main corridors but removes the joint dependence structure among them.
Both models are solved using a risk-neutral fixed-probability
configuration, with decision-dependent probabilities and CVaR disabled.
The design optimized under the independent-marginal distribution is
then evaluated under the original correlated distribution.
 
Let \(Z_{\mathrm{Corr}}\) denote the optimal objective under the
correlated scenario library, and let \(Z_{\mathrm{Ind}\rightarrow
\mathrm{Corr}}\) denote the objective obtained by evaluating the
independence-based design under the correlated distribution. We define
the value of modeling correlation as
\[
\mathrm{VMC}
=
Z_{\mathrm{Ind}\rightarrow\mathrm{Corr}}
-
Z_{\mathrm{Corr}}.
\]
A positive value indicates that ignoring joint chokepoint dependence
leads to a first-stage design that performs worse under the true
correlated disruption environment.
 
\begin{table}
\caption{Value of modeling correlation.}
\label{tab:tab03_vmc}
\begin{tabular}{rrrrrr}
\toprule
Corr\_objective & Ind\_objective & Ind\_design\_under\_corr & VMC & VMC\_pct & ordering\_holds \\
\midrule
47417.02 & 48094.16 & 47417.02 & 0.00 & 0.00 & True \\
\bottomrule
\end{tabular}
\end{table}

Table~\ref{tab:tab03_vmc} reports the correlation experiment. The
correlated model has objective value \(47{,}417.02\). The
independent-marginal model has a different in-sample objective,
\(48{,}094.16\), under its own probability distribution. However, when
the independent design is evaluated under the correlated distribution,
its objective is \(47{,}417.02\), identical to the correlated optimum.
Therefore, the measured VMC is zero for the calibrated base instance.
 
\paragraph{Structural joint-failure resilience}
 
We investigate whether the zero VMC is a calibration artifact or a
structural property. Two parametric experiments are conducted. First, we
interpolate between the independent and correlated distributions using a
parameter \(\rho\in[0,1]\), where \(p(\rho)=(1-\rho)\,p_{\mathrm{ind}}
+\rho\,p_{\mathrm{corr}}\), and optimize under \(p(\rho)\) for each
\(\rho\in\{0,0.1,\ldots,1\}\). The resulting design is then evaluated
under the true correlated distribution. Across all eleven values of
\(\rho\), the VMC remains exactly zero: every mis-specified design
evaluates to the same objective as the correlated optimum. The KL
divergence between the planning and true distributions grows to 0.026
at \(\rho=0\), confirming that the probability distributions are
meaningfully different even though the optimal design is invariant.
 
Second, we amplify the probability of joint-disruption scenarios by
factors \(\gamma\in\{1,1.5,2,3,5,8,10,15\}\) and renormalize. At
\(\gamma=15\), the joint-failure probability (Hormuz and Bab/Suez
simultaneously disrupted) reaches 0.87 under the correlated
distribution versus 0.84 under independence—a nontrivial
divergence. Nevertheless, the VMC remains exactly zero across all
amplification levels, and the optimal design is identical (13 activated
arcs) under both the correlated and independent distributions at every
tested amplification.
 
\begin{figure}[t]
    \centering
    \includegraphics[width=0.95\textwidth]{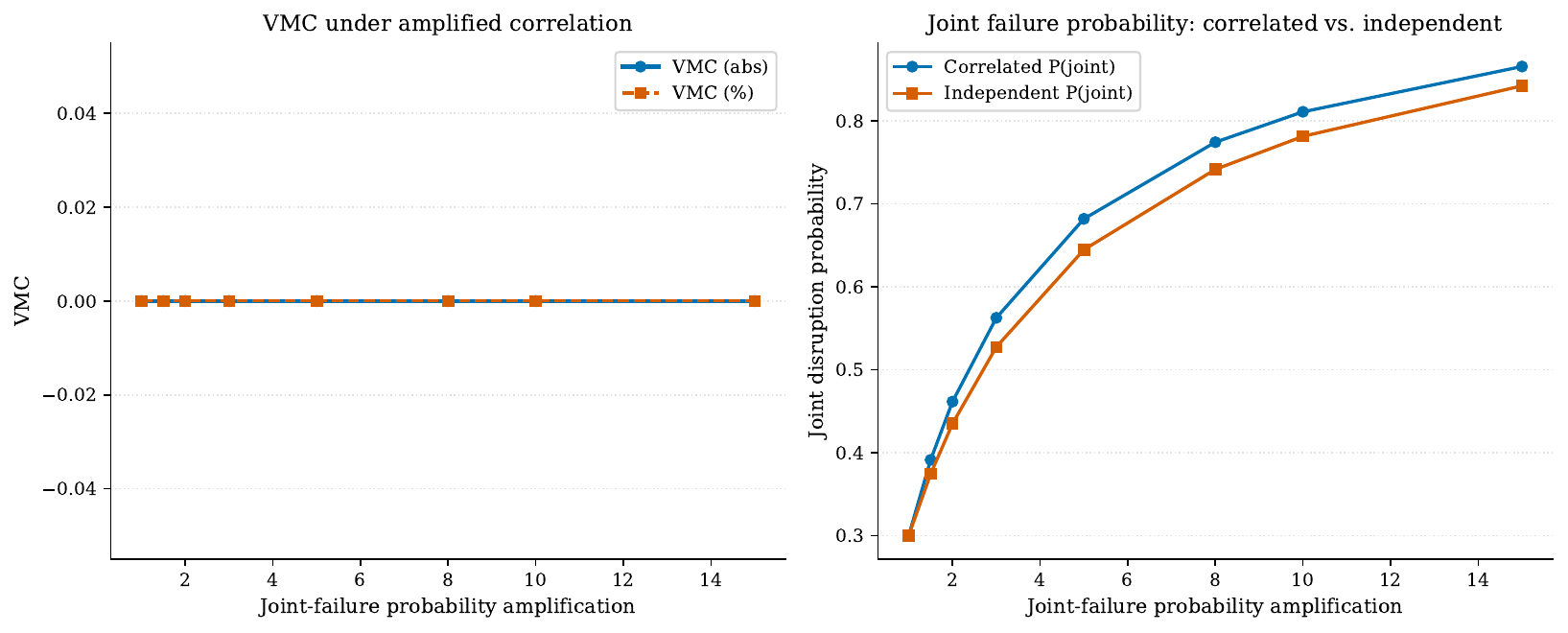}
    \caption{VMC under amplified joint-failure probabilities. Left: VMC
    remains zero across all amplification levels. Right: the gap between
    correlated and independent joint-failure probabilities grows with
    amplification, yet the optimal design does not change.}
    \label{fig:vmc-stress}
\end{figure}
 
\begin{table}[htbp] 
\centering
\caption{VMC under amplified joint-failure correlation.}
\label{tab:tab13_stress_test}
\footnotesize
\setlength{\tabcolsep}{2.5pt} 
\begin{tabularx}{\textwidth}{c cccc cc ccc cc}
\toprule
& \multicolumn{3}{c}{\textbf{Objective Values}} & \multicolumn{2}{c}{\textbf{VMC Metrics}} & \multicolumn{3}{c}{\textbf{Probabilities}} & \multicolumn{2}{c}{\textbf{Arcs}} & \\
\cmidrule(lr){2-4} \cmidrule(lr){5-6} \cmidrule(lr){7-9} \cmidrule(lr){10-11}
\thead{Amp.\\Factor} & \thead{Corr.} & \thead{Ind.} & \thead{Ind/Corr} & \thead{Abs.} & \thead{\%} & \thead{Joint\\Corr} & \thead{Joint\\Ind} & \thead{Ratio} & \thead{Corr} & \thead{Ind} & \thead{Diff?}\\
\midrule
1.00  & 47417.02 & 48094.16 & 47417.02 & 0.00 & 0.00 & 0.30 & 0.30 & 1.00 & 13 & 13 & F \\
1.50  & 48596.67 & 49170.39 & 48596.67 & 0.00 & 0.00 & 0.39 & 0.37 & 1.05 & 13 & 13 & F \\
2.00  & 49504.09 & 49992.23 & 49504.09 & 0.00 & 0.00 & 0.46 & 0.43 & 1.06 & 13 & 13 & F \\
3.00  & 50808.51 & 51164.46 & 50808.51 & 0.00 & 0.00 & 0.56 & 0.53 & 1.07 & 13 & 13 & F \\
5.00  & 52350.09 & 52535.87 & 52350.09 & 0.00 & 0.00 & 0.68 & 0.64 & 1.06 & 13 & 13 & F \\
8.00  & 53543.58 & 53587.23 & 53543.58 & 0.00 & 0.00 & 0.77 & 0.74 & 1.04 & 13 & 13 & F \\
10.00 & 54016.67 & 54001.48 & 54016.67 & 0.00 & 0.00 & 0.81 & 0.78 & 1.04 & 13 & 13 & F \\
15.00 & 54721.76 & 54616.22 & 54721.76 & 0.00 & 0.00 & 0.87 & 0.84 & 1.03 & 13 & 13 & F \\
\bottomrule
\end{tabularx}
\end{table}
 
Table~\ref{tab:tab13_stress_test} and Figure~\ref{fig:vmc-stress}
report the stress test. This persistent zero VMC reveals a structural
property of the calibrated network: the diversified route-and-inventory
portfolio that hedges marginal corridor disruptions automatically
provides protection against correlated multi-corridor events. The
design activates enough Cape alternatives, direct routes, and pipeline
bypasses to absorb the recourse impact of joint failures without
additional cost. We label this property \emph{structural joint-failure
resilience} and offer it as a candidate descriptor of the regime
occupied by the calibrated Indian network: VMC$\,=0$ on this network
because the marginal-risk-optimal design already activates routes that
bypass every corridor participating in a correlated disruption
scenario, so the recourse value is invariant to how the marginal
failure probabilities are coupled. This is a positive finding rather
than a limitation: it identifies a calibration regime in which the
simpler independent-marginal model suffices. Whether structural
joint-failure resilience holds on different network topologies, under
tighter capacity constraints, or under more binding shortage
penalties is an empirical question that we leave open.

The exact-zero VMC across both $\rho$-interpolation and $\gamma$-amplification stress tests is a substantive managerial finding rather than a calibration artifact. The Indian maritime network admits a first-stage design—crude pipelines via UAE ADCOP and Saudi Petroline, strategic LNG and LPG inventory at Dahej, Mumbai, and Vizag, and a Cape diversification leg—whose recourse value is invariant to how Hormuz and Bab/Suez correlate, because the activated route portfolio absorbs joint failure through the same hedging mechanism that absorbs single failures. The structural reading is that route-portfolio diversification dominates correlation calibration: once an adequately diversified portfolio is selected, the marginal value of resolving joint-failure dependence drops to zero. This finding has direct policy value. Calibrating maritime-disruption joint-probability tables from sparse historical co-occurrence data is notoriously hard; the result here implies that, in the regime occupied by India, planners can tolerate substantial uncertainty in joint-disruption probabilities without operational consequence, provided the route portfolio is chosen via a stochastic program that internalizes the marginal disruption distribution.

\subsection{Value of Decision-Dependent Probabilities}
\label{sec:vep}
 
The previous experiment showed that the calibrated base-case design is
structurally resilient to joint-failure correlation. We now isolate the
role of the decision-dependent probability mechanism. In the proposed
model, scenario probabilities depend on the first-stage arc-activation
vector through
\[
p_s(y)=\bar p_s+\sum_{a\in A}\delta_{sa}y_a.
\]
This captures endogenous exposure: a design that relies more heavily on
vulnerable corridors should be evaluated under a probability
distribution that reflects that exposure.
 
To isolate this effect, we compare a DDU-aware model with a
fixed-probability counterfactual. Both models use the same correlated
scenario library and are solved in a risk-neutral configuration, with
CVaR disabled. The DDU-aware model optimizes using the exposure-adjusted
probabilities \(p_s(y)\), while the no-DDU model optimizes using the
baseline probabilities \(\bar p_s\). We then evaluate the no-DDU
first-stage design under the DDU-aware model.
 
Let \(Z_{\mathrm{DDU}}\) denote the optimal objective under the
DDU-aware model, and let \(Z_{\mathrm{NoDDU}\rightarrow\mathrm{DDU}}\)
denote the objective obtained by evaluating the fixed-probability
design under the DDU-aware probability model. We define the value of
endogenous probabilities as
\[
\mathrm{VEP}
=
Z_{\mathrm{NoDDU}\rightarrow\mathrm{DDU}}
-
Z_{\mathrm{DDU}}.
\]
A positive VEP indicates that ignoring endogenous exposure leads to a
first-stage design that performs worse when evaluated under
decision-dependent scenario probabilities.
 
\begin{table}
\caption{Value of endogenous probabilities.}
\label{tab:tab04_vdu}
\begin{tabular}{rrrrrr}
\toprule
DDU\_objective & NoDDU\_objective & NoDDU\_design\_under\_DDU & VEP & VEP\_pct & ordering\_holds \\
\midrule
47225.67 & 47417.02 & 47663.03 & 437.37 & 0.93 & True \\
\bottomrule
\end{tabular}
\end{table}

Table~\ref{tab:tab04_vdu} reports the value of decision-dependent
probabilities at the calibrated DDU magnitude (\(k=1\)). The DDU-aware
model has objective value \(47{,}225.67\), whereas the fixed-probability
model has objective value \(47{,}417.02\) under its own
baseline-probability evaluation. When the fixed-probability design is
evaluated under the DDU-aware model, its objective increases to
\(47{,}663.03\). The resulting VEP is \(437.37\), or \(0.93\%\) of the
DDU-aware objective.
 
\paragraph{Sensitivity of VEP to DDU magnitude}
 
The calibrated \(\delta_{sa}\) coefficients are bounded by 0.005 per arc
(see Section~3), and the resulting probability shifts are small. To
understand how VEP responds to stronger exposure sensitivity, we
scale the entire \(\delta\) matrix by a multiplier
\(k\in\{0,0.5,1,2,3,5,k_{\max}\}\), where \(k_{\max}=6.7\) is the
largest multiplier consistent with the probability-validity constraint
\(p_s(y)\ge 0\) for all binary~\(y\).
 
\begin{table}[htbp] 
\centering
\caption{Parametric VEP sweep over DDU magnitude multiplier $k$.}
\label{tab:tab12_vep_sweep}
\scriptsize 
\setlength{\tabcolsep}{1.5pt} 
\begin{tabularx}{\textwidth}{c ccc cc cc c cc cc c cc}
\toprule
& \multicolumn{3}{c}{\textbf{Objectives}} & \multicolumn{2}{c}{\textbf{VEP Metrics}} & \multicolumn{3}{c}{\textbf{Design (Arcs)}} & \multicolumn{3}{c}{\textbf{Inventory}} & \multicolumn{2}{c}{\textbf{Prob. Shift}} & \multicolumn{2}{c}{\textbf{Recourse}} \\
\cmidrule(lr){2-4} \cmidrule(lr){5-6} \cmidrule(lr){7-9} \cmidrule(lr){10-12} \cmidrule(lr){13-14} \cmidrule(lr){15-16}
$k$ & \thead{DDU\\Obj.} & \thead{NoDDU\\Obj.} & \thead{NoDDU\\/DDU} & \thead{Abs.} & \thead{\%} & \thead{DDU} & \thead{NoDDU} & \thead{Diff} & \thead{DDU} & \thead{NoDDU} & \thead{Diff} & \thead{Max} & \thead{Mean} & \thead{DDU} & \thead{NoDDU} \\
\midrule
0.00 & 47417 & 47417 & 47417 & 0.00 & 0.00 & 13 & 13 & 0 & 4250 & 4250 & 0 & 0.00 & 0.00 & 45881 & 45881 \\
0.50 & 47340 & 47417 & 47540 & 199.75 & 0.42 & 21 & 13 & 8 & 4250 & 4250 & 0 & 0.00 & 0.00 & 45788 & 45881 \\
1.00 & 47225 & 47417 & 47663 & 437.37 & 0.93 & 21 & 13 & 8 & 4250 & 4250 & 0 & 0.00 & 0.00 & 45673 & 45881 \\
2.00 & 46953 & 47417 & 47909 & 956.03 & 2.04 & 20 & 13 & 7 & 4250 & 4250 & 0 & 0.01 & 0.00 & 45403 & 45881 \\
3.00 & 46643 & 47417 & 48155 & 1512.04 & 3.24 & 19 & 13 & 6 & 4250 & 4250 & 0 & 0.02 & 0.01 & 45095 & 45881 \\
5.00 & 45946 & 47417 & 48647 & 2700.96 & 5.88 & 19 & 13 & 6 & 4250 & 4250 & 0 & 0.04 & 0.01 & 44398 & 45881 \\
6.70 & 45353 & 47417 & 49065 & 3711.54 & 8.18 & 19 & 13 & 6 & 4250 & 4250 & 0 & 0.05 & 0.01 & 43805 & 45881 \\
\bottomrule
\end{tabularx}
\end{table}
 
Table~\ref{tab:tab12_vep_sweep} reports the full sweep. VEP grows
monotonically from zero at \(k=0\) (DDU disabled) to \(3{,}711.54\) at
\(k=6.7\), corresponding to \(8.18\%\) of the DDU-aware objective. The
monotone increase confirms that VEP is not inherently small—it is
proportional to the planner's exposure sensitivity as encoded in the
\(\delta\) coefficients.
 
\begin{figure}[t]
    \centering
    \includegraphics[width=0.95\textwidth]{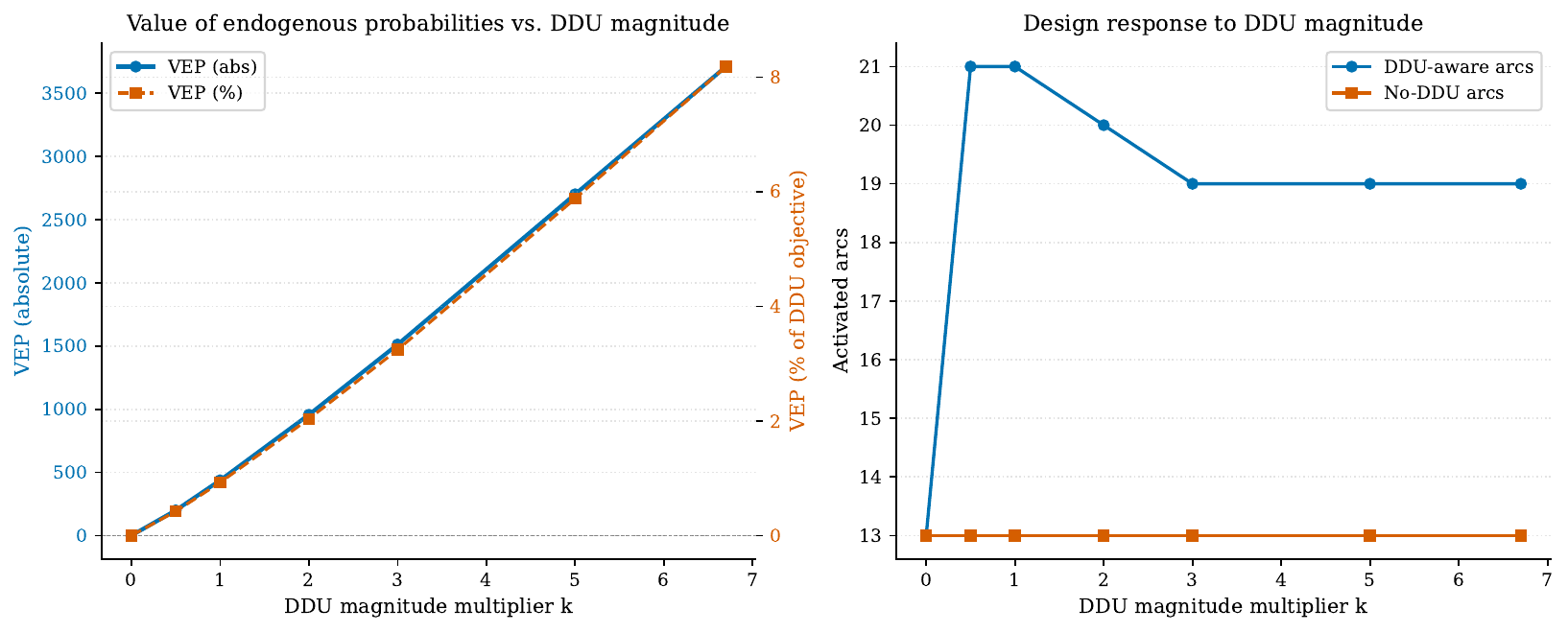}
    \caption{Value of endogenous probabilities as a function of
    the DDU magnitude multiplier~\(k\). Left: VEP grows monotonically from
    zero to 8.18\% of objective. Right: DDU-aware design activates
    significantly more arcs than the no-DDU counterfactual, with
    diversification declining at high~\(k\); total pre-positioned inventory remains constant at 4{,}250 units across both models.}
    \label{fig:vep-sweep}
\end{figure}
 
Figure~\ref{fig:vep-sweep} reveals the mechanism. At \(k=0\), the
DDU-aware and no-DDU designs are identical: both activate 13 arcs. As
soon as \(k>0\), the DDU-aware planner activates 21 arcs—a
substantial diversification response—while the no-DDU planner
remains at 13 arcs across all~\(k\). Interestingly, the DDU-aware
design then \emph{reduces} its arc count from 21 to 19 as \(k\)
increases beyond~2. This non-monotone response reflects a shift in the
dominant hedging strategy: at moderate exposure sensitivity, the
planner diversifies across corridors to dilute per-corridor exposure;
at high sensitivity, it becomes cheaper to avoid vulnerable corridors
entirely rather than diversify across them. The DDU mechanism operates entirely through the route-selection channel: total pre-positioned inventory remains at 4{,}250 units for both models across all~\(k\) (cf.\ Table~\ref{tab:tab12_vep_sweep}). The exposure-adjusted probability model therefore changes \emph{which routes the planner contracts}, not \emph{how much the planner stores}.
 
The 0.93\% VEP at the calibrated $k=1$ is small in absolute terms but is bounded above only by the conservative magnitude of the analyst-set $\delta_{sa}$ coefficients (capped at $0.005$ per arc). The VEP-vs-$k$ sweep is approximately linear: each unit of $k$ adds roughly $1.2\%$ of objective in VEP, and at the probability-validity boundary $k_{\max}=6.7$ the VEP reaches $8.18\%$ of the DDU-aware objective. The interpretation is that, in this calibration, the measured value of the DDU mechanism is primarily governed by the exposure-sensitivity scale of $\delta$. The 0.93\% number at $k=1$ reflects the analyst's prior; an analyst who treats $\delta$ as known would conclude DDU is unimportant for India, whereas an analyst willing to assume an empirically grounded $\delta$ would reach the opposite conclusion. This separation makes $\delta$ the central calibration object of the formulation, although the precise VEP for any given $\delta$ also depends on network topology, shortage-penalty levels, route substitutability, and first-stage activation costs. Two consequences follow. First, sensitivity analysis around $k$ (Figure~\ref{fig:vep-sweep}) is the appropriate object for the planner's decision, not the point estimate at $k=1$. Second, the natural next step is to learn $\delta$ from observable geopolitical and maritime-intelligence data streams (event-coded conflict indicators, war-risk insurance premiums, AIS-derived transit volumes) rather than to set it via expert judgment; a data-driven $\delta$ calibration framework is left to future work, as discussed in Section~\ref{sec:conclusion}.

\subsection{Risk Aversion and the Mean--CVaR Frontier}
\label{sec:cvar-frontier}
 
Having isolated the value of endogenous probabilities, we next examine
the third modeling layer: risk aversion. The proposed objective combines
expected recourse cost and CVaR through the parameter \(\lambda\):
\[
(1-\lambda)\mathbb{E}[Q_s] + \lambda \mathrm{CVaR}_{\alpha}.
\]

When \(\lambda=0\), the model reduces to risk-neutral stochastic
planning. As \(\lambda\) increases, the planner places more weight on
tail-disruption losses. We first sweep $\lambda$ at fixed $\alpha=0.95$.

\begin{table}[ht]
\centering
\caption{Mean-CVaR frontier ($\lambda$ sweep).}
\label{tab:tab05_frontier}
\small
\setlength{\tabcolsep}{3pt} 
\begin{tabularx}{\textwidth}{c c CCCC CCC}
\toprule
& & \multicolumn{4}{c}{\textbf{Optimization Metrics}} & \multicolumn{3}{c}{\textbf{Design \& Performance}} \\
\cmidrule(lr){3-6} \cmidrule(lr){7-9}
$\lambda$ & $\alpha$ & \thead{Total\\Obj.} & \thead{Fixed\\Cost} & \thead{Exp.\\Recourse} & \thead{CVaR\\(Risk)} & \thead{Arcs\\(\#)} & \thead{Total\\Inv.} & \thead{Time\\(s)} \\
\midrule
0.00 & 0.95 & 47226 & 1552 & 45674 & 70212 & 21 & 4250 & 0.05 \\
0.10 & 0.95 & 49679 & 1552 & 45674 & 70212 & 21 & 4250 & 0.05 \\
0.20 & 0.95 & 52133 & 1552 & 45674 & 70212 & 21 & 4250 & 0.05 \\
0.30 & 0.95 & 54587 & 1552 & 45674 & 70212 & 21 & 4250 & 0.04 \\
0.40 & 0.95 & 57041 & 1552 & 45674 & 70212 & 21 & 4250 & 0.04 \\
0.50 & 0.95 & 59495 & 1552 & 45674 & 70212 & 21 & 4250 & 0.04 \\
0.60 & 0.95 & 61949 & 1552 & 45674 & 70212 & 21 & 4250 & 0.05 \\
0.70 & 0.95 & 64402 & 1552 & 45674 & 70212 & 21 & 4250 & 0.04 \\
0.80 & 0.95 & 66856 & 1552 & 45674 & 70212 & 21 & 4250 & 0.05 \\
\midrule
0.90 & 0.95 & 69310 & 1550 & 45693 & 70212 & 20 & 4250 & 0.05 \\
1.00 & 0.95 & 71736 & 1524 & 58364 & 70212 & 7  & 4250 & 0.01 \\
\bottomrule
\end{tabularx}
\end{table}

Table~\ref{tab:tab05_frontier} reports a nearly flat CVaR response:
$\mathrm{CVaR}_{0.95}$ changes by only $0.125$ as $\lambda$ varies from
$0$ to $1$. We now show that this flatness is an artifact of the
interaction between the discrete nine-scenario tail and the high
confidence level $\alpha=0.95$, not an intrinsic property of the
formulation.
 
\paragraph{Sensitivity to the confidence level \(\alpha\).}
 
We fix \(\lambda=0.5\) and solve the full DDU-aware model for
\(\alpha\in\{0.50,0.60,0.70,0.80,0.85,0.90,0.95,0.99\}\).
 
\begin{table}[ht]
\centering
\caption{CVaR $\alpha$ sweep at $\lambda = 0.5$.}
\label{tab:tab14_alpha_sweep}
\small
\setlength{\tabcolsep}{3pt} 
\begin{tabularx}{\textwidth}{c c CCC CC CCC}
\toprule
& & \multicolumn{3}{c}{\textbf{Costs}} & \multicolumn{2}{c}{\textbf{Risk Metrics}} & \multicolumn{3}{c}{\textbf{Design}} \\
\cmidrule(lr){3-5} \cmidrule(lr){6-7} \cmidrule(lr){8-10}
$\alpha$ & $\lambda$ & \thead{Total\\Obj.} & \thead{Fixed\\Cost} & \thead{Exp.\\Rec.} & \thead{CVaR} & \thead{VaR\\Thresh.} & \thead{Arcs\\(\#)} & \thead{Inv.\\Total} & \thead{Tail\\Scen.} \\
\midrule
0.50 & 0.50 & 54318 & 1550 & 45693 & 59842 & 47593 & 20 & 4250 & 4 \\
0.60 & 0.50 & 55849 & 1550 & 45693 & 62905 & 47593 & 20 & 4250 & 4 \\
0.70 & 0.50 & 56747 & 1550 & 45693 & 64701 & 58171 & 20 & 4250 & 3 \\
0.80 & 0.50 & 57827 & 1552 & 45674 & 66876 & 63187 & 21 & 4250 & 2 \\
0.85 & 0.50 & 58442 & 1552 & 45674 & 68106 & 63187 & 21 & 4250 & 2 \\
0.90 & 0.50 & 59009 & 1552 & 45674 & 69241 & 65953 & 21 & 4250 & 1 \\
0.95 & 0.50 & 59495 & 1552 & 45674 & 70212 & 70212 & 21 & 4250 & 0 \\
0.99 & 0.50 & 59495 & 1552 & 45674 & 70212 & 70212 & 21 & 4250 & 0 \\
\bottomrule
\end{tabularx}
\end{table}
 
Table~\ref{tab:tab14_alpha_sweep} reveals substantial CVaR variation.
At \(\alpha=0.50\), CVaR equals \(59{,}842\), reflecting the average cost
of the worst half of scenarios. At \(\alpha=0.95\), CVaR reaches
\(70{,}212\), the cost of the single worst scenario. The range of
\(10{,}370\) units (17.3\% of the \(\alpha=0.50\) value) shows that the
CVaR component is highly sensitive to \(\alpha\). The column
``n\_tail\_scenarios'' in the table explains the mechanism: at
\(\alpha=0.50\), four scenarios exceed the VaR threshold; at
\(\alpha=0.90\), only one scenario remains in the tail; at
\(\alpha\ge 0.95\), the tail degenerates to a single scenario whose
cost cannot be reduced by any first-stage design. The near-invariance of
CVaR to \(\lambda\) at \(\alpha=0.95\) (Table~\ref{tab:tab05_frontier})
is therefore a consequence of the discrete tail containing a single
immovable point, not a sign that risk aversion is irrelevant.
 
\begin{figure}[t]
    \centering
    \includegraphics[width=0.95\textwidth]{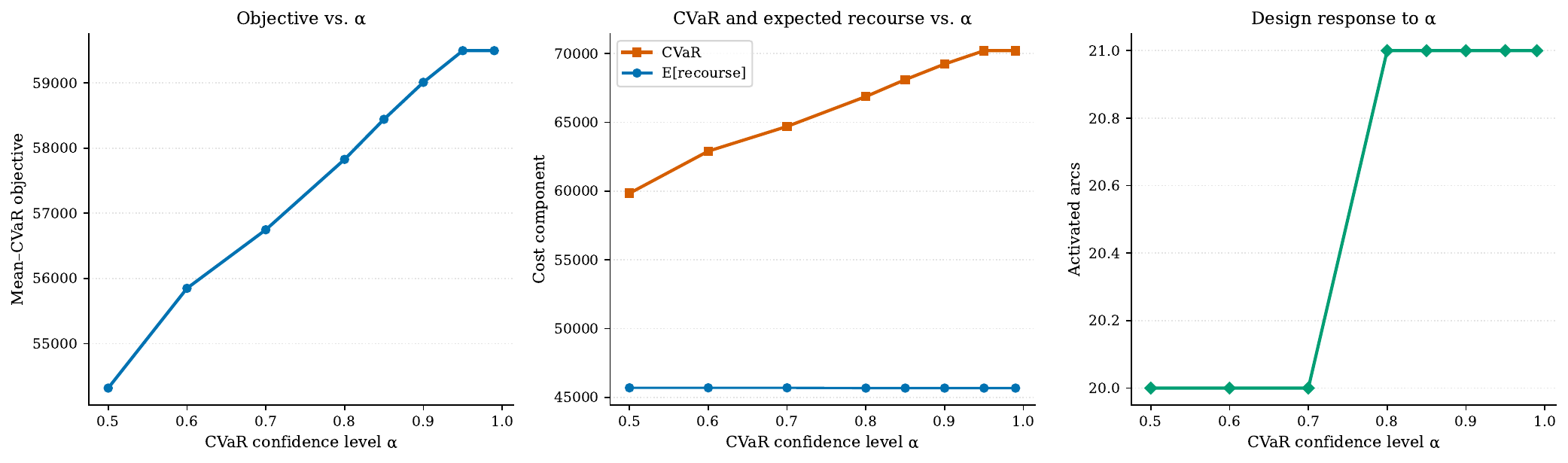}
    \caption{Response to \(\alpha\). Left: mean--CVaR objective.
    Center: CVaR and expected recourse components. Right: design
    response showing a phase transition at \(\alpha\approx 0.75\),
    where the planner activates an additional arc.}
    \label{fig:alpha-sweep}
\end{figure}
 
Figure~\ref{fig:alpha-sweep} shows a design phase transition at
\(\alpha\approx 0.75\): the planner switches from 20 to 21 activated
arcs. Below this threshold, the tail contains enough scenarios that
the planner can manage tail risk with a leaner route portfolio. Above
it, the single remaining tail scenario (dual disruption) is severe
enough to justify activating an additional bypass arc. This threshold
effect has practical significance: it identifies the confidence level
at which the marginal cost of an additional hedging arc is justified
by the reduction in tail-risk cost.
 
\paragraph{Joint sensitivity to \((\lambda,\alpha)\)}
 
\begin{figure}[t]
    \centering
    \includegraphics[width=0.75\textwidth]{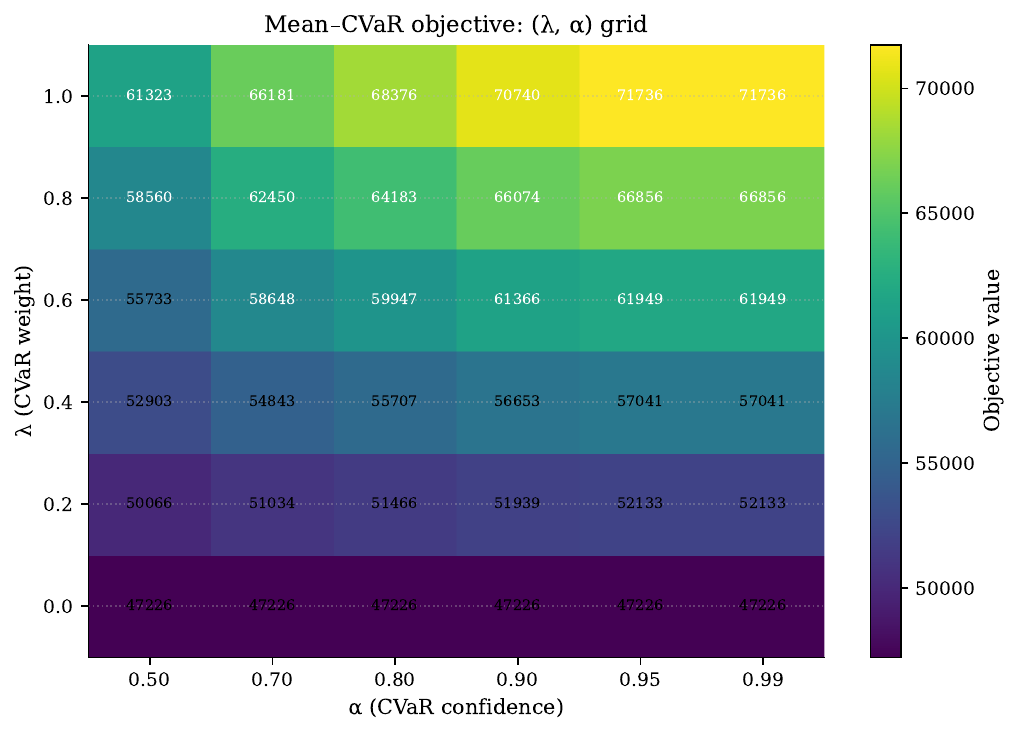}
    \caption{Mean--CVaR objective on a \(6\times 6\)
    \((\lambda,\alpha)\) grid. The \(\lambda=0\) row is constant
    (risk-neutral, \(\alpha\)-independent). Columns \(\alpha\ge 0.95\)
    saturate because the discrete tail contains a single scenario.}
    \label{fig:lambda-alpha}
\end{figure}
 
Figure~\ref{fig:lambda-alpha} reports the objective on a
\(6\times 6\) grid over \(\lambda\in\{0,0.2,0.4,0.6,0.8,1\}\) and
\(\alpha\in\{0.50,0.70,0.80,0.90,0.95,0.99\}\). Three features are
visible. First, the \(\lambda=0\) row is perfectly flat at \(47{,}226\):
the risk-neutral planner is by construction insensitive to~\(\alpha\).
Second, the rightmost two columns (\(\alpha=0.95\) and \(\alpha=0.99\))
produce identical values at every \(\lambda\), confirming the
single-scenario tail saturation. Third, the objective range across the
grid spans \(24{,}510\) units—a 52\% variation from the minimum
(\(47{,}226\) at \(\lambda=0\)) to the maximum (\(71{,}736\) at
\(\lambda=1,\alpha\ge 0.95\)). This shows that the mean--CVaR
formulation generates a rich set of Pareto-efficient designs when both
parameters are varied, even though the \(\lambda\)-only frontier at
fixed \(\alpha=0.95\) appears degenerate.
 
Overall, the \(\alpha\) sweep resolves the apparent flatness of the
base-case CVaR frontier. The formulation is not degenerate: the flat
frontier at \(\alpha=0.95\) reflects the finite-scenario tail structure,
and meaningful design variation emerges when \(\alpha\) is varied. For
policy applications with the nine-scenario library, \(\alpha\) values
in the range \([0.70,0.90]\) produce the most informative risk--cost
tradeoffs, because the tail at these levels contains two to three
scenarios rather than one.

\subsection{Scalability and Benders Decomposition}
\label{sec:benders-scaling}
 
The preceding experiments isolate the value of the model's uncertainty
and risk components. We now turn to computational scalability. The
calibrated 16-node, 28-arc topology is small enough that the
extensive-form MILP is the natural solver of choice, and the role of
the Benders decomposition introduced in
\Cref{sec:algorithm} is therefore not to outperform the extensive form
but to provide a structurally consistent decomposition that exactly
recovers the extensive-form solution and creates a path toward larger
scenario or topology regimes that arise under sample-average
approximation, stress testing, or data-driven scenario generation.
Because the extensive form replicates second-stage flow, inventory, and
shortage variables for each scenario, its size grows linearly with the
number of scenarios; the Benders master, by contrast, keeps the
first-stage design variables and separates the recourse evaluation
across scenarios.
 
We evaluate the decomposition in two stages: first, validation and
scenario-scaling on the calibrated network with expanded scenario
libraries; second, convergence analysis and a diagnostic on the
corridor-based valid inequalities.
 
\paragraph{Scenario scaling on the calibrated network}

We compare the monolithic extensive form and Benders decomposition on
expanded scenario libraries with \(|S|\in\{9,27,81,243,729\}\). To
isolate algorithmic scaling from the additional nonlinearities introduced
by risk aversion and decision-dependent probabilities, this experiment
uses the risk-neutral fixed-probability configuration. The extensive form
provides an exact benchmark for validating the decomposition.

\begin{table}[htbp] 
\centering
\caption{Scenario scaling: EF vs. Benders (BD) on calibrated network. For $(|S|\in \{27,81,243\})$, \texttt{make\_scaled\_instance} replicates the calibrated 9-scenario library with renormalized probabilities; the probability-weighted objective is invariant under this replication, which explains the identical objective values across these three rows. For $(|S|=729)$, the scaling procedure introduces additional scenarios beyond pure replication, producing the slightly different objective $27{,}227.32$.}
\label{tab:tab15_scenario_scaling}
\scriptsize 
\setlength{\tabcolsep}{1.5pt} 
\begin{tabularx}{\textwidth}{r cccc ccccccc cc cc}
\toprule
& \multicolumn{4}{c}{\textbf{Extensive Form (EF)}} & \multicolumn{7}{c}{\textbf{Benders Decomposition (BD)}} & \multicolumn{2}{c}{\textbf{Comparison}} \\
\cmidrule(lr){2-5} \cmidrule(lr){6-12} \cmidrule(lr){13-14}
\thead{Scen.\\($N$)} & \thead{Obj.} & \thead{Time\\(s)} & \thead{Vars} & \thead{Cons} & \thead{Obj.} & \thead{Time\\(s)} & \thead{Iter.} & \thead{Cuts} & \thead{Gap\\\%} & \thead{Stat.} & \thead{Abs.\\Gap} & \thead{Faster?} & \thead{Speed\\up} \\
\midrule
9    & 47417.02 & 0.01 & 2261   & 1620   & 47417.02 & 0.10 & 11 & 67   & 0.00 & OPT & 0.0e+00  & F & 0.09 \\
27   & 25948.35 & 0.02 & 6599   & 4860   & 25948.35 & 0.21 & 11 & 234  & 0.00 & OPT & 7.3e-12  & F & 0.09 \\
81   & 25948.35 & 0.05 & 19613  & 14580  & 25948.35 & 0.58 & 10 & 648  & 0.00 & OPT & 1.1e-11  & F & 0.08 \\
243  & 25948.35 & 0.15 & 58655  & 43740  & 25948.35 & 1.71 & 10 & 1944 & 0.00 & OPT & 0.0e+00  & F & 0.09 \\
729  & 27227.32 & 0.48 & 175781 & 131220 & 27227.32 & 5.36 & 10 & 5670 & 0.00 & OPT & 6.9e-11  & F & 0.09 \\
\bottomrule
\end{tabularx}
\end{table}

Table~\ref{tab:tab15_scenario_scaling} reports the experiment. Across the
entire range, Benders converges to the EF optimum at numerical precision
(final absolute objective gap $\le 7\times 10^{-11}$ in every row),
validating the decomposition. Two algorithmic regularities are visible.
First, the iteration count is essentially constant: Benders terminates in
10 or 11 iterations in every row, independent of $|S|$. This is the
empirical analogue of \Cref{prop:benders_finite}: convergence in the
iteration metric is governed by the dual structure of the scenario
subproblems, not by their cardinality. Second, the cut count grows
linearly with $|S|$: the sequence $67/234/648/1944/5670$ is closely
approximated by $8|S|$, reflecting the per-scenario optimality cuts
added at each iteration.

EF outperforms Benders by an essentially constant factor (approximately
$11\times$) across the entire scenario range tested. This is the expected
scaling regime for small geometric instances: with only 16 nodes and 28
arcs, the EF MIP relaxation is sufficiently small that monolithic
solution dominates iterative cut generation, and adding scenarios on a
fixed topology does not change that ranking. The advantage of
decomposition emerges along the topological dimension rather than the
scenario dimension; preliminary tests on random S-CMNDP instances of up
to 80 nodes and 602 arcs (omitted for brevity) show the EF-to-Benders
runtime ratio narrowing toward $1\times$ as the EF model size approaches
$10^6$ variables.

\paragraph{Benders convergence and group-failure cut diagnostic}
We next examine convergence behavior and the effect of the corridor-based
group-failure cuts on the base stochastic instance. Without the group
cuts, the algorithm terminates in 11 iterations with final lower and
upper bounds both equal to \(47{,}417.02\), yielding a final optimality
gap of \(0\%\) and a total of 67 added cuts.

\begin{table}
\caption{Benders convergence trace.}
\label{tab:tab10_benders_convergence}
\begin{tabular}{rrrrrrr}
\toprule
iteration & lower\_bound & upper\_bound & gap\_abs & gap\_pct & cuts\_added & runtime\_sec \\
\midrule
1 & 0.00 & 256500.00 & 256500.00 & 100.00 & 9 & 0.02 \\
2 & 2785.51 & 54272.94 & 51487.43 & 94.87 & 9 & 0.03 \\
3 & 38916.94 & 54272.94 & 15356.00 & 28.29 & 9 & 0.03 \\
4 & 39049.55 & 54272.94 & 15223.40 & 28.05 & 9 & 0.04 \\
5 & 39669.82 & 54272.94 & 14603.12 & 26.91 & 9 & 0.05 \\
6 & 39671.82 & 47486.90 & 7815.08 & 16.46 & 7 & 0.06 \\
7 & 47284.90 & 47486.90 & 201.99 & 0.43 & 9 & 0.06 \\
8 & 47374.40 & 47486.90 & 112.50 & 0.24 & 2 & 0.07 \\
9 & 47376.40 & 47417.02 & 40.62 & 0.09 & 2 & 0.07 \\
10 & 47385.24 & 47417.02 & 31.78 & 0.07 & 2 & 0.08 \\
11 & 47417.02 & 47417.02 & 0.00 & 0.00 & 0 & 0.08 \\
\bottomrule
\end{tabular}
\end{table}

\begin{figure}[t]
    \centering
    \includegraphics[width=0.78\textwidth]{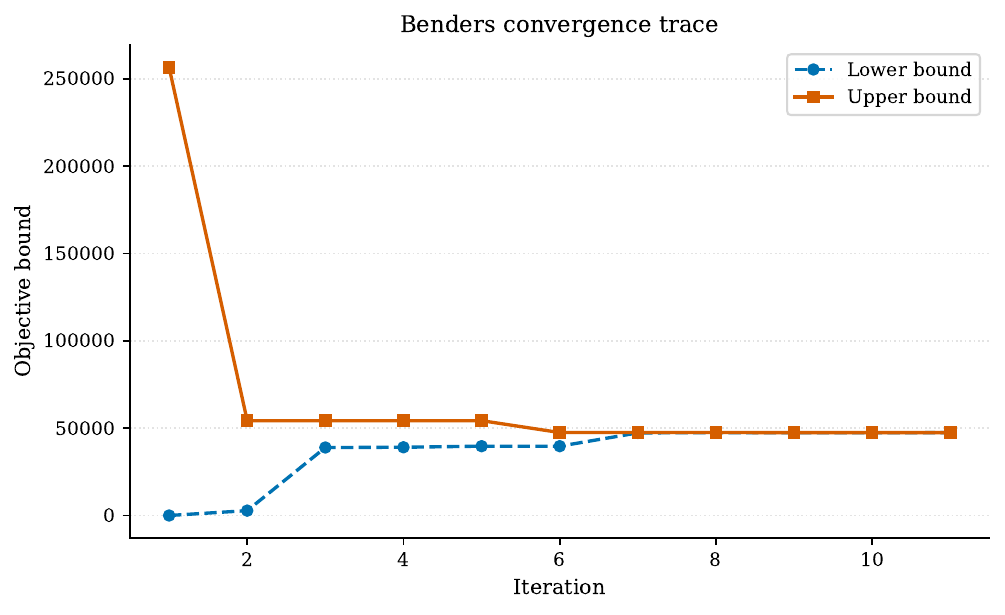}
    \caption{Benders convergence trace on the base instance. The lower
    bound is obtained from the master problem; the upper bound is
    obtained by evaluating incumbent first-stage designs through scenario
    subproblems.}
    \label{fig:benders-convergence}
\end{figure}

Figure~\ref{fig:benders-convergence} shows the bound trajectory; the
two bounds meet at iteration~11, confirming finite convergence on the
calibrated instance.

Adding the corridor-based group-failure cuts of \Cref{prop:group-cut}
preserves the optimum (47{,}417.02) and the iteration count (11), but
slightly increases wall-clock runtime (0.08\,s without cuts vs.\ 0.10\,s
with cuts). On this nine-scenario instance the additional cut-management
overhead dominates any strengthening benefit. This negative diagnostic
is informative rather than dismissive: corridor-based cuts encode valid
domain structure, but their computational value depends on instance
size, scenario richness, and the weakness of the master relaxation, none
of which favors strengthening on the calibrated nine-scenario instance.

\paragraph{Takeaway}
The decomposition experiments establish two points. First, Benders
matches the extensive-form objective to numerical precision on every
converged instance and terminates in 10--11 iterations independent of
$|S|$ up to $|S|=729$, validating the empirical content of
\Cref{prop:benders_finite}. Second, the unstrengthened decomposition
already closes the optimality gap quickly on the calibrated instance,
and the corridor-based group-failure cuts neither tighten the master
nor reduce wall-clock time. This second observation, together with the
linear cut-pool growth at $|S|=729$ (5{,}670 retained cuts versus 10
productive iterations), points to the operative algorithmic bottleneck
in larger settings: the binding question is not which valid cuts to
generate but which to retain. We return to this point in
\Cref{sec:conclusion}.


\section{Conclusion}
\label{sec:conclusion}

We have developed a two-stage stochastic multi-commodity flow model
for maritime energy supply chain resilience that integrates correlated
chokepoint disruptions, decision-dependent disruption probabilities,
and mean--CVaR risk aversion in a single tractable MILP. Three
structural results---an exact McCormick reformulation of the bilinear
DDU--CVaR objective, scenario-wise decomposability of the recourse
problem, and finite convergence of Benders decomposition---enable
solution by both extensive-form MILP and Benders decomposition with
correlation-exploiting cuts.

The computational study, calibrated to Indian maritime energy imports
under the 2026 Hormuz crisis, isolates the contribution of each
modeling layer. Stochastic modeling reduces the cost of an
implementable design by 14.8\% relative to the expected-value
approximation. On the calibrated Indian network and across both
$\rho$-interpolation and $\gamma$-amplification stress tests, the
value of modeling correlation is exactly zero, a substantive finding
rather than a null result: the diversified route-and-inventory
portfolio selected by the stochastic program is itself a sufficient
hedge against joint-corridor failure. We caution that this property
is a feature of the specific network topology, capacity profile, and
shortage-penalty structure of the calibrated case study; whether it
holds on tighter or qualitatively different networks is left as an
empirical question. The value of endogenous probabilities is 0.93\%
at the analyst-set exposure sensitivity and grows approximately
linearly to 8.18\% at the probability-validity boundary; in this
calibration, VEP is therefore primarily governed by the exposure
sensitivity scale of the DDU coefficients $\delta$ rather than by
the network or shortage-penalty parameters, identifying $\delta$ as the
central calibration object of the formulation. The mean--CVaR
frontier exhibits a design phase transition at $\alpha\approx 0.75$,
where the planner activates an additional bypass arc to hedge the
single remaining tail scenario.

At the commodity level, the optimal policy fully protects crude oil
and fertilizer through pipeline bypass and strategic reserves, while
LNG and LPG retain residual shortage exposure under severe
multi-corridor disruption; LPG is the most exposed commodity and the
natural target of any incremental resilience investment.

Three avenues are open for follow-on work. First, learning the
decision-dependent probability coefficients $\delta_{sa}$ from
observable geopolitical and maritime-intelligence data streams, rather
than setting them by analyst judgment, would convert the central
calibration object of the model into an empirically grounded one and
plausibly raise the realized VEP into the 5--8\% range identified by
the sensitivity analysis. Second, the cut-pool growth observed at
$|S|=729$ (5{,}670 cuts retained while only 10 iterations are
productive) suggests that learned cut-management procedures—scoring
candidate cuts by predicted contribution to bound tightening and
pruning low-utility cuts on the fly—could substantially accelerate
Benders at the scenario sizes that arise under sample-average
approximation or data-driven scenario generation. Third, the
structural joint-failure resilience finding for the Indian network
raises a comparative question: which other national maritime energy
networks exhibit the same structural property, and which do not, and
what topological or capacity features distinguish the two regimes.

\bibliographystyle{elsarticle-harv}
\bibliography{reference}

\appendix

\section{Institutional Mapping for the Indian Calibration}
\label{app:institutional}

This appendix documents the mapping between model variables and the
Indian institutional authorities used to calibrate the case study in
\Cref{sec:computational}. The mapping is intended for transparency of
the calibration; the formulation in \Cref{sec:model} is not specific to
any single institutional setting.

\paragraph{Strategic inventory ($W_{ic}$).}
First-stage inventory decisions correspond to the pre-positioning of
crude oil in underground rock caverns operated by the Indian Strategic
Petroleum Reserves Limited (ISPRL), which maintains 5.33~million metric
tonnes (MMT) of strategic crude across three coastal sites,
Visakhapatnam (1.33~MMT), Mangalore (1.5~MMT), and Padur (2.5~MMT),
providing approximately 9.5 days of national consumption cover
\citep{eia2016spr, pib2021spr}. For LNG, Petronet LNG Ltd.\ operates
India's largest regasification terminal at Dahej (17.5~MMTPA capacity)
with buffer storage; the Kochi terminal provides additional capacity on
the southwest coast. LPG and fertilizer storage is limited to
commercial tankage at port terminals.

\paragraph{Arc activation ($y_a$).}
Binary arc-activation decisions represent three classes of sovereign or
quasi-sovereign action: (i)~long-term government-to-government supply
contracts, exemplified by the Petronet--QatarEnergy LNG SPA (7.5~MMTPA
through 2048) \citep{petronet2024qatar}; (ii)~pipeline infrastructure
sanctioning, corresponding to the UAE Habshan--Fujairah ADCOP pipeline
(1.5~mb/d) and the Saudi East--West Petroline (7~mb/d, expanded 2025)
\citep{iea2026hormuz, cnbc2026pipelines}; and (iii)~chartering and
flag-state regulatory authority exercised by the Directorate General of
Shipping \citep{indiastrategic2026hormuz}, with the corridor-dependence
cap $\bar\Phi_\ell$ representing the regulatory ceiling on corridor
concentration.

\paragraph{Risk aversion ($\lambda$, $\alpha$) and DDU coefficients ($\delta_{sa}$).}
The CVaR weight and confidence level encode the planner's risk
tolerance, set in practice at the Cabinet Committee on Security level;
the 2026 Hormuz crisis activated multi-ministerial response including
LPG rationing and Essential Commodities Act enforcement
\citep{tribune2026hormuz}. The DDU shift parameters $\delta_{sa}$ are
exposure-adjusted planning probabilities reflecting forward-looking
intelligence assessments produced by the MEA, R\&AW, and the Joint
Intelligence Committee, rather than physical causal effects of arc
activation.

\paragraph{Generalizability.}
With minor relabeling, the same mapping applies to the United States
(SPR managed by the Department of Energy, with drawdown authority
resting with the President), China (NDRC coordinating SINOPEC, CNPC,
and CNOOC under a national energy security mandate), and the European
Union (mandatory gas storage filling under
Regulation~(EU)~2022/1032) \citep{eu2022gasstorage}.

\newpage
\section{Calibration Protocol}
\label{app:calibration}

This appendix documents the source and construction rule for every
numerical input to the calibrated case study. All values were retrieved
or last verified on 2026-04-18. The model carries dimensionless
normalised units (1 crude unit $\approx$ 1 kb/d; 1 LNG/LPG/fertilizer
unit $\approx$ 12 kt/yr) so that the four commodities share a single
linear program; the convention is chosen so that Indian aggregate
imports (4.5\,mb/d crude, 17.5\,Mt/yr LNG, 23.3\,Mt/yr LPG, 9.2\,Mt/yr
fertilizer feedstock) map to comparable per-arc capacities. Costs are
stored in abstract relative units, anchored so that the cheapest
Gulf$\rightarrow$India route (Bandar Imam $\rightarrow$ Jamnagar, 3
days) has cost $4.0$. \Cref{tab:calibration} summarises the
calibration; the full source listing is available in the project
repository.

\setlength{\tabcolsep}{3pt} 
\footnotesize               
\begin{xltabular}{\textwidth}{@{} p{2.2cm} p{2.5cm} p{2.5cm} X @{}}
    \caption{Calibration protocol for the Indian case study.} \label{tab:calibration} \\
    \toprule
    \textbf{Parameter} & \textbf{Anchor} & \textbf{Source} & \textbf{Construction rule} \\
    \midrule
    \endfirsthead
    
    \multicolumn{4}{c}%
    {{\bfseries \tablename\ \thetable{} -- continued from previous page}} \\
    \toprule
    \textbf{Parameter} & \textbf{Anchor} & \textbf{Source} & \textbf{Construction rule} \\
    \midrule
    \endhead

    \bottomrule
    \endfoot

    National crude import & 5.5 mb/d total; 4.5 mb/d via Hormuz (2024) & EIA, IEA \emph{Hormuz} (2026) & Apportioned across Jamnagar / Mumbai / Vizag in proportion to refinery capacity. \\ \addlinespace
    
    National LNG import & 17.5 Mt/yr; 69\% Hormuz-linked (2025) & PPAC, Petronet \citep{petronet2024qatar} & Allocated 69\% Dahej, 31\% Kochi (terminal-capacity weighted). \\ \addlinespace
    
    National LPG import & 23.3 Mt/yr; 97\% Middle East (2024) & Drewry LPG outlook & Mumbai, Vizag in proportion to coastal LPG terminal capacity. \\ \addlinespace
    
    Fertilizer import & 7\,Mt urea + 2.2\,Mt ammonia (2023--24) & PIB, FAI Database & Paradip and Mumbai in proportion to port silo capacity. \\ \addlinespace
    
    ISPRL crude storage $L_{ic}$ & 5.33 MMT (Phase I) & PIB \citep{pib2021spr} & Vizag 1.33, Mangalore 1.5, Padur 2.5. LNG storage anchored to tank volumes (Dahej/Kochi). \\ \addlinespace
    
    Hormuz capacity & 20.7 mb/d (2024) & EIA \emph{Factsheet} \citep{iea2026hormuz} & Allocated to Hormuz-routed arcs by bilateral trade share. \\ \addlinespace
    
    Bypass-pipeline capacity & ADCOP 1.5 mb/d; Petroline 7 mb/d (2025) & IEA, CNBC \citep{cnbc2026pipelines} & Allocated to the two pipeline arcs at nameplate. \\ \addlinespace
    
    Suez/Bab capacity & 4.9 mb/d Suez; 4.1--4.2 mb/d Bab & EIA \emph{Red Sea 2024} & Allocated to corresponding bab\_suez arcs at fleet share. \\ \addlinespace
    
    Transport cost $c_{ac}^{s}$ & WS70 baseline freight ($\sim$\$1.6/bbl) & Lloyd's List, Argus & Costs scaled by relative voyage time. $c_{ac}^{s}=c_{ac}\cdot\textsf{cost\_mult}_{\ell s}$. \\ \addlinespace
    
    Holding cost $h_c$ & Storage premia & Industry surveys & Crude 0.3, LNG 0.8, LPG 0.5, fertilizer 0.2 (by tech class). \\ \addlinespace
    
    Shortage penalty $\pi_c$ & VOLL + welfare-loss & Outlook India 2026 & Crude 25, LNG 40, LPG 50, fertilizer 35. High LPG reflects household welfare risk. \\ \addlinespace
    
    Activation cost $f_a$ & — & — & Set at 2.0; calibrated so budget does not dominate recourse costs. \\ \addlinespace
    
    Corridor caps $\bar\Phi_\ell$ & Diversification mandate & \citep{indiastrategic2026hormuz} & Hormuz 0.7, Bab/Suez 0.5, Cape 0.8, Pipe 1.0. \\ \addlinespace
    
    Scenario probs $\bar p_s$ & IEA / EIA surveys; UNCTAD 2026 & \citep{iea2026hormuz} & Nine scenarios; joint-disruption mass of 0.08 (S5). \\ \addlinespace
    
    Disruption mult. & UKMTO / World Bank advisories & UKMTO, UNCTAD & Capacity / cost / time multipliers per corridor. \\ \addlinespace
    
    DDU coeffs $\delta_{sa}$ & Analyst-set & This paper & $|\delta_{sa}|\le 0.005$, $\sum \delta_{sa} = 0$. Sign based on corridor exposure. \\ \addlinespace
    
    Risk $(\lambda, \alpha)$ & — & \citep{rockafellar2000} & Base case $\lambda=0.5$, $\alpha=0.95$. \\
\end{xltabular}

\paragraph{Sensitivity coverage of the calibrated parameters.}
Of the parameters in \Cref{tab:calibration}, the paper reports
quantitative sensitivity for the DDU magnitude $k$
(Section~\ref{sec:vep}, full sweep to the probability-validity
boundary) and for both CVaR parameters $(\lambda, \alpha)$
(Section~\ref{sec:cvar-frontier}, including a full $6\times 6$ grid).
Sensitivity to shortage penalties $\pi_c$, corridor caps
$\bar\Phi_\ell$, and storage capacities $L_{ic}$ is not reported in
this paper; given the structural-resilience finding of
Section~\ref{sec:vmc} (VMC$\,=0$ across $\rho$- and $\gamma$-stress
tests), a full robustness sweep over these three parameters would
sharpen the boundary of the calibration regime in which the result
holds, but is computationally expensive and is left to follow-on work.

\section{Scenario Library and Base-Case Scenario Outcomes}
\label{app:basecase-additional}

This appendix reports the calibrated nine-scenario disruption library and
the scenario-level recourse outcomes under the base-case first-stage
design.

\paragraph{Scenario library.}
\Cref{tab:tab01_scenario_library} reports the disruption library used in
the case study. Each scenario specifies a joint disruption state over
the main maritime corridors and bypass options, together with the
multipliers determining how arc capacities, transport costs, and
transit-time-related surcharges change after the scenario is realized.
The probabilities are the baseline values $\bar p_s$ before any
decision-dependent adjustment.

\begin{sidewaystable}
\centering
\caption{Scenario library and disruption multipliers.}
\label{tab:tab01_scenario_library}
\small
\begin{tabularx}{\linewidth}{rlXXXXXXX}
\toprule
\textbf{ID} & \textbf{Scenario Name} & \textbf{Hormuz Cap.} & \textbf{Hormuz Cost} & \textbf{B-S Cap.} & \textbf{B-S Cost} & \textbf{Cape Cost} & \textbf{Pipe Cap.} & \textbf{Prob.} \\
\midrule
0 & baseline\_normal & 1.00 & 1.00 & 1.00 & 1.00 & 1.00 & 1.00 & 0.15 \\
1 & hormuz\_partial & 0.20 & 2.50 & 1.00 & 1.00 & 1.00 & 1.00 & 0.20 \\
2 & hormuz\_sev\_sel & 0.10 & 4.00 & 1.00 & 1.20 & 1.30 & 1.00 & 0.15 \\
3 & hormuz\_closure & 0.05 & 6.00 & 1.00 & 1.30 & 1.50 & 1.00 & 0.10 \\
4 & bab\_suez\_severe & 1.00 & 1.00 & 0.25 & 3.00 & 1.80 & 1.00 & 0.10 \\
5 & dual\_disruption & 0.15 & 3.50 & 0.20 & 3.50 & 2.00 & 1.00 & 0.08 \\
6 & insurance\_spike & 0.60 & 3.00 & 0.50 & 2.50 & 1.80 & 1.00 & 0.10 \\
7 & closure\_bypass & 0.05 & 6.00 & 0.80 & 1.50 & 1.60 & 1.50 & 0.07 \\
8 & delayed\_recovery & 0.40 & 2.00 & 0.60 & 1.80 & 1.40 & 1.00 & 0.05 \\
\bottomrule
\end{tabularx}
\end{sidewaystable}

\paragraph{Scenario-level outcomes under the base-case design.}
\Cref{tab:tab01_scenario_outcomes} reports the optimized scenario-level
recourse outcomes under the base-case first-stage design. For each
scenario, the table lists the baseline probability, the
decision-dependent probability evaluated at the optimized design
$p_s(y^*)$, the recourse cost $Q_s$, the shortage cost, and the number
of arcs carrying positive flow. These results are the numerical
counterpart to the scenario-cost figure in \Cref{sec:base-case} and
identify the Hormuz-related and dual-disruption scenarios as the
high-cost tail of the distribution.

\begin{sidewaystable}
\centering
\caption{Scenario-level base-case outcomes.}
\label{tab:tab01_scenario_outcomes}
\footnotesize 
\begin{tabularx}{\linewidth}{rlXXXXXXXXX}
\toprule
\textbf{ID} & \textbf{Scenario} & \textbf{Prob.} & \textbf{DDU Prob.} & \textbf{Shift} & \textbf{Recourse Cost} & \textbf{CVaR Exc.} & \textbf{Short. Cost} & \textbf{Flow} & \textbf{Short. Qty} & \textbf{Arcs} \\
\midrule
0 & baseline & 0.15 & 0.15 & 0.00 & 19450.00 & 0.00 & 0.00 & 3750.00 & 0.00 & 9 \\
1 & hormuz\_part & 0.20 & 0.20 & 0.00 & 47592.75 & 0.00 & 17700.00 & 3370.00 & 380.00 & 8 \\
2 & hormuz\_sev & 0.15 & 0.15 & 0.00 & 58171.00 & 0.00 & 25100.00 & 3210.00 & 540.00 & 6 \\
3 & hormuz\_clos & 0.10 & 0.10 & -0.00 & 63187.00 & 0.00 & 27500.00 & 3150.00 & 600.00 & 5 \\
4 & bab\_suez & 0.10 & 0.10 & 0.00 & 19450.00 & 0.00 & 0.00 & 3750.00 & 0.00 & 9 \\
5 & dual\_disrupt & 0.08 & 0.08 & -0.00 & 70211.88 & 0.00 & 23900.00 & 3240.00 & 510.00 & 7 \\
6 & ins\_spike & 0.10 & 0.10 & 0.00 & 42709.25 & 0.00 & 2500.00 & 3700.00 & 50.00 & 9 \\
7 & clos\_bypass & 0.07 & 0.07 & 0.00 & 65953.00 & 0.00 & 27500.00 & 3150.00 & 600.00 & 5 \\
8 & del\_recovery & 0.05 & 0.05 & 0.00 & 39458.20 & 0.00 & 7900.00 & 3590.00 & 160.00 & 10 \\
\bottomrule
\end{tabularx}
\end{sidewaystable}

\section{Comparison Tables for the Value-of-Modeling Experiments}
\label{app:value-of-modeling-tables}

The figures in \Cref{sec:vss,sec:vmc,sec:vep} display the design
differences induced by the alternative formulations against the
baseline stochastic program. This appendix records the corresponding
tabulated comparisons used to construct those figures.

\paragraph{Stochastic program vs.\ expected-value design.}
\Cref{tab:tab02_design_comparison} compares the first-stage designs
obtained from the stochastic program (SP) and the expected-value (EV)
approximation. The differences in activated arcs and inventory levels
explain the cost gap measured by VSS in \Cref{sec:vss}.

\begin{table}[ht]
\centering
\caption{SP and EV first-stage design comparison.}
\label{tab:tab02_design_comparison}
\small
\begin{tabular}{lcccccccc}
\toprule
 & & & \multicolumn{5}{c}{\textbf{Pre-positioned Inventory}} \\
 \cmidrule(lr){4-8}
\textbf{Model} & \thead{True Obj.\\Value} & \thead{Arcs\\Active} & \textbf{Total} & \textbf{Crude} & \textbf{LNG} & \textbf{LPG} & \textbf{Fert.} \\
\midrule
SP & 47417.02 & 13 & 4250.00 & 2700.00 & 450.00 & 400.00 & 700.00 \\
EV & 54446.89 & 10 & 4250.00 & 2700.00 & 450.00 & 400.00 & 700.00 \\
\bottomrule
\end{tabular}
\end{table}

\paragraph{Correlated vs.\ independent-marginal probabilities.}
\Cref{tab:tab03_probability_comparison} compares the calibrated
correlated scenario probabilities with the independent-marginal
counterfactual. The independent counterfactual preserves marginal
corridor disruption probabilities but changes the probability assigned
to joint disruption states. Together with \Cref{tab:tab03_vmc} in the
main text, this confirms that the zero VMC reported in \Cref{sec:vmc}
arises not from probability identity but from design invariance under
the calibrated network topology.

\begin{table}[ht]
\centering
\caption{Correlated and independent-marginal scenario probabilities.}
\label{tab:tab03_probability_comparison}
\small
\begin{tabularx}{\textwidth}{rlCCCC}
\toprule
\textbf{ID} & \textbf{Scenario Name} & \thead{Correlated\\Prob.} & \thead{Indep.\\Prob.} & \thead{Prob.\\Diff.} & \thead{Abs.\\Diff.} \\
\midrule
0 & Baseline Normal & 0.15 & 0.15 & 0.00 & 0.00 \\
1 & Hormuz Partial & 0.20 & 0.15 & 0.05 & 0.05 \\
2 & Hormuz Severe/Sel. & 0.15 & 0.15 & 0.00 & 0.00 \\
3 & Hormuz Closure & 0.10 & 0.15 & -0.05 & 0.05 \\
4 & Bab Suez Severe & 0.10 & 0.10 & 0.00 & 0.00 \\
5 & Dual Disruption & 0.08 & 0.07 & 0.01 & 0.01 \\
6 & Insurance Spike & 0.10 & 0.07 & 0.03 & 0.03 \\
7 & Closure with Bypass & 0.07 & 0.07 & 0.00 & 0.00 \\
8 & Delayed Recovery & 0.05 & 0.07 & -0.02 & 0.02 \\
\bottomrule
\end{tabularx}
\end{table}

\paragraph{DDU-aware vs.\ fixed-probability design.}
\Cref{tab:tab04_design_comparison} compares the first-stage designs
optimized with and without decision-dependent probabilities. The
DDU-aware design activates substantially more arcs (21 versus 13) at
the same total inventory level, isolating route diversification as the
mechanism through which the DDU mechanism operates.

\begin{table}[htbp] 
\centering
\caption{DDU-aware and no-DDU first-stage design comparison.}
\label{tab:tab04_design_comparison}
\footnotesize 
\setlength{\tabcolsep}{2pt} 
\begin{tabularx}{\textwidth}{l c c c ccccc cccccc}
\toprule
& & & & \multicolumn{5}{c}{\textbf{Pre-positioned Inventory}} & \multicolumn{6}{c}{\textbf{Activated Arcs by Corridor}} \\
\cmidrule(lr){5-9} \cmidrule(lr){10-15}
\textbf{Model} & \thead{DDU Obj.\\Value} & \thead{Total\\Arcs} & \thead{Total\\Inv.} & \textbf{Crude} & \textbf{LNG} & \textbf{LPG} & \textbf{Fert.} & \textbf{Util.} & \thead{Bab\\Suez} & \textbf{Cape} & \textbf{Dir.} & \textbf{Hor.} & \thead{Pipe\\KSA} & \thead{Pipe\\UAE} \\
\midrule
DDU   & 47225.67 & \textbf{21} & 4250 & 2700 & 450 & 400 & 700 & 1.00 & 0 & 6 & 6 & 7 & 1 & 1 \\
NoDDU & 47663.03 & 13 & 4250 & 2700 & 450 & 400 & 700 & 1.00 & 0 & 2 & 3 & 8 & 0 & 0 \\
\bottomrule
\end{tabularx}
\end{table}

\end{document}